\let\csname equation*\endcsname\relax
\let\csname endequation*\endcsname\relax
\providecommand{\tabularnewline}{\\}
\newcommand{\lyxdot}{.}
\DeclareRobustCommand{\lyxdeleted}[3]{{\color{lyxdeleted}\lyxsout{#3}}}
\DeclareRobustCommand{\lyxsout}[1]{\ifx\\#1\else\sout{#1}\fi}
\theoremstyle{plain}
\newtheorem{thm}{\protect\theoremname}
\theoremstyle{plain}
\newtheorem{cor}[thm]{\protect\corollaryname}
\theoremstyle{plain}
\newtheorem{lem}[thm]{\protect\lemmaname}
\theoremstyle{plain}
\newtheorem*{thm*}{\protect\theoremname}
\theoremstyle{definition}
\newtheorem{defn}[thm]{\protect\definitionname}
\providecommand{\corollaryname}{Corollary}
\providecommand{\lemmaname}{Lemma}
\providecommand{\theoremname}{Theorem}
\providecommand{\definitionname}{Definition}
\begin{document}

\maketitle

\title[Numerical Error in Inverse Problems for PDEs]{The Influence of Numerical Error on an Inverse Problem Methodology in PDE Models}

\author{John T. Nardini$^{1,2}$, D. M. Bortz$^1$}

\address{$^1$ Department of Applied Mathematics,
526 UCB,
Boulder, CO 80309-0526}

\address{$^2$ Interdisciplinary Quantitative Biology Graduate Program,
University of Colorado,
Boulder, CO United States 80309-0596}

\ead{john.nardini@colorado.edu, dmbortz@colorado.edu}

\begin{abstract}
The inverse problem methodology is a commonly-used framework in the
sciences for parameter estimation and inference. It is typically performed
by fitting a mathematical model to noisy experimental data. There
are two significant sources of error in the process: 1.$\ $Noise
from the measurement and collection of experimental data and 2.$\ $numerical
error in approximating the true solution to the mathematical model.
Little attention has been paid to how this second source of error
alters the results of an inverse problem. As a first step towards
a better understanding of this problem, we present a modeling and
simulation study using a simple advection-driven PDE model. We present
both analytical and computational results concerning how the different
sources of error impact the least squares cost function as well as
parameter estimation and uncertainty quantification. We investigate
residual patterns to derive an autocorrelative statistical model that
can improve parameter estimation and confidence interval computation
for first order methods. Building on the results of our investigation,
we provide guidelines for practitioners to determine when numerical
or experimental error is the main source of error in their inference,
along with suggestions of how to efficiently improve their results.
\end{abstract}


\section{Introduction}

Differential Equations are frequently used to study scientific systems.
When there are multiple independent variables influencing this system
(such as time and space), then a partial differential equation (PDE)
is the appropriate modeling framework. Due to their complicated nature,
deriving an analytical solution to a PDE model is frequently difficult
or impossible, so scientists must use numerical methods to approximate
the true solution. How the error from this approximation influences
some aspects of an inverse problem, such as parameter estimation and
uncertainty quantification, is an important and poorly-understood
problem. We will focus on a deterministic inverse problem in this study, but approximation
errors in determining the likelihood are of significant concern in
Bayesian methods as well ~\citep{holmes_practical_2015}. There have
been some notable previous efforts to elucidate these questions. For example,
Banks and Fitzpatrick ~\citep{banks_statistical_1990} proved the asymptotic
consistency of the parameter estimator for least squares estimation
in the presence of numerical approximation error, and Xue \emph{et
al.} ~\citep{xue_sieve_2010} derived the asymptotic distribution of
this estimator when a numerical approximation is used to for an ordinary
differential equation model.

In a previous study on parameter estimation, Ackleh and Thibodeaux
~\citep{ackleh_parameter_2008} consider an advection-driven model
of erythyropoiesis (an important step in red blood cell development)
with three independent variables of time, maturity, and space. The
authors show that using an upwind scheme for computation during an
inverse problem is asymptotically well-posed for parameter estimation
as the numerical step size used, $h$, approaches zero. In practice,
however, one cannot let $h$ approach zero but must choose a finite
value of $h$ to estimate the parameters with. Furthermore, advection
equations such as that used in ~\citep{ackleh_parameter_2008} are
known to cause a multitude of numerical issues, especially when the
true solution is discontinuous ~\citep{leonard_ultimate_1991,randall_j._leveque_conservation_1992,thackham_computational_2008}.
The upwind method is a popular choice to simulate these problems because
it can avoid spurious oscillations by satisfying the Courant-Friedrichs-Lewy
(CFL) condition, but this method also causes its own difficulties
by admitting numerical diffusion near points of discontinuity ~\citep{leveque_finite_2007,randall_j._leveque_conservation_1992}.

The work in these publications raise a multitude of questions to consider
when performing an inverse problem. How do the least squares cost
function and parameter estimator behave as numerical error decreases?
For the step size used, what is the dominant source of error in the
cost function computation? If numerical error dominates, can we use
residuals as a way to update how we compute the cost function and
improve the inverse problem's results? How do we know if the chosen
numerical method can accurately estimate parameters? How do the results
change based on properties of the model's solution?

In this study, we will use a simple advection equation to demonstrate
the impact of numerical error from several finite difference and finite volume methods
on an inverse problem methodology. To compare the influence of numerical
versus experimental error in this study, we will fit these computations
to data sets that have been artificially generated from the analytical
solution with varying levels of experimental noise. We begin in Section
~\ref{sec:Mathematical-Preliminaries} by introducing some preliminary
information, including the equation under consideration and its analytical
solution, how we generate the artificial data sets, and the numerical
methods used in this study. In Section ~\ref{sec:Numerical-Cost-Function},
we introduce the inverse problem methodology and discuss the asymptotic
results for the parameter estimator and numerical cost function used
in this framework. In Section ~\ref{sec:Inverse-Problem-Results},
we discuss our results in using these numerical methods to estimate
parameters from the data sets. We use residual analysis in Section
~\ref{sec:Residual-Analysis} to demonstrate how numerical error from
first order numerical methods leads to an autocorrelated error structure
when comparing the model to data. To address this issue, we derive
an autocorrelative statistical model to describe how this numerical
error propagates throughout the inverse problem. We further demonstrate
how this autocorrelated statistical model can be used to improve confidence
interval computation and parameter estimation. Based on these results,
we provide some suggestions and guidance for practitioners in Section
~\ref{sec:Suggestions-for-practitioners} to ensure that the results
of their inverse problem routines are as accurate as possible. We
make concluding remarks and discuss future work in Section ~\ref{sec:Discussion-and-Future}.

\section{Mathematical Preliminaries \label{sec:Mathematical-Preliminaries}}

In this section, we detail some necessary information regarding our
inverse problem methodology. We discuss the advection equation and
choice of parameterization in Section ~\ref{subsec:PDE-Model-Equation}.
In Section ~\ref{subsec:Explanation-of-Notation}, we will present
some notation used throughout this work. We present how we generate
artificial data for this study in Section ~\ref{subsec:Artificial-Data-Generation}.
In Section ~\ref{subsec:Numerical-Methods}, we discuss the numerical
schemes that we will use in this study. 

\subsection{PDE Model Equation\label{subsec:PDE-Model-Equation}}

We will consider an advection equation in one spatial dimension. We
define our spatial domain as $X=[0,1]$, the temporal domain as $T=[0,10],$
and the parameter value domain as $\Theta=\mathbb{R}^{k_{\theta}}$
for $k_{\theta}$ denoting the number of parameters to be estimated.
The advection equation is given by

\begin{align}
u_{t}+(g(x;\theta)u)_{x} & =0,\ u=u(t,x;\theta)\label{eq:adv_eqn}\\
u(t=0,x;\theta) & =\phi(x)\nonumber \\
x\in X,t\in T & ,\theta\in\Theta\nonumber 
\end{align}

where subscripts denote differentiation, $g(x;\theta)$ is a spatially-dependent
advection rate that is parameterized by the vector $\theta$, $\phi(x)$
is the initial condition, and $u(t,x;\theta)$ denotes the quantity
of interest at time $t$ and spatial location $x$ that is also parameterized
by $\theta$. We will suppress the dependence of $g(x)$ and $u(t,x)$
on $\theta$ throughout this study when this dependence can be implicitly
understood.

The method of characteristics can be used to show the analytical solution
to Equation (\ref{eq:adv_eqn}) of

\begin{equation}
u_{0}(t,x)=\begin{cases}
\dfrac{g(\sigma^{-1}(-t,x))}{g(x)}\phi(\sigma^{-1}(-t,x)) & \sigma^{-1}(t,0)\le x\le1\\
0 & \text{otherwise}
\end{cases}\label{eq:adv_analtical}
\end{equation}
where $\sigma^{-1}(t,x)$ is the characteristic curve that satisfies
the initial value problem
\begin{align*}
\dfrac{\partial}{\partial t}\sigma^{-1}(t,x)=g\left(\sigma^{-1}(t,x)\right),\ \sigma^{-1}(t=0,x)=x.
\end{align*}
See ~\citep{webb_population_2008} for more information about deriving
this analytical solution and ~\citep{nardini_investigation_2018} for
an illustrative example of this concept involving biochemical activation
during wound healing. We choose the rate of advection
\begin{align*}
g(x)=\alpha\sqrt[\beta]{x},\ \ \ \alpha,\beta>0
\end{align*}
for $\theta=(\alpha,\beta)^{T}\in\Theta=\mathbb{R}^{2}$. The choice
of $g(x)$ above yields the characteristic curves 
\begin{align*}
\sigma^{-1}(t,x)=\left[\alpha(1-\nicefrac{1}{\beta})t+x^{1-\nicefrac{1}{\beta}}\right]^{\nicefrac{\beta}{(\beta-1)}}.
\end{align*}

We will consider two initial conditions in this study to demonstrate
how spatial continuity influences numerical convergence and the inverse
problem results. To demonstrate the behavior for a discontinuous solution,
we will focus on simulations with a \emph{discontinuous} initial condition
given by the step function 
\begin{align*}
\phi_{d}(x)=\begin{cases}
5 & x\le0.2\\
0 & \text{otherwise}
\end{cases}
\end{align*}
To illustrate how the results change for a \emph{continuous} solution,
we will also consider the Gaussian-shaped initial condition given
by
\begin{align*}
\phi_{c}(x)=e^{-\left(\nicefrac{x-0.2}{\sqrt{.005}}\right)^{2}}.
\end{align*}
We will focus on the results for $\phi(x)=\phi_{d}(x)$ in the main
body of this document, with the corresponding results for $\phi(x)=\phi_{c}(x)$
in the supporting material. We will make note of how the results change
between these two initial conditions when appropriate.

\subsection{Explanation of Notation\label{subsec:Explanation-of-Notation}}

Note that throughout this work, $M$ and $N$ denote the number of
time and spatial points provided in an artificial data set, respectively.
We will denote the numerical step size as $h$, which will determine
the number of grid points used during numerical computations. It is
important to realize that $h$, $M$, and $N$ are all independent
of one another.

Our data sets will be provided on the uniform partitions of $T\times X$
given by $T^{M}\times X^{N}$, where
\begin{align*}
T^{M}=\{t_{i}\}_{i=1}^{M}=10\left\{ \nicefrac{(i-1)}{M}\right\} _{i=1}^{M},\ \ X^{N}=\{x_{j}\}_{j=1}^{N}=\left\{ \nicefrac{j-1}{N}\right\} _{j=1}^{N}.
\end{align*}
We will write a given data set as the $M\times N$ matrix, $Y$. The
$(i,j)th$ entry of $Y$ is given by
\begin{align*}
\left[Y\right]_{i,j}=y_{i,j},
\end{align*}
where $y_{i,j}$ denotes the observation of the data at time $t_{i}$
and location $x_{j}.$ We will denote the analytical solution to Equation
(\ref{eq:adv_eqn}) with parameter value $\theta$ on $T^{M}\times X^{N}$
as the $M\times N$ matrix, $U_{0}(\theta)$, with $(i,j)th$ entry
\begin{align*}
\left[U_{0}(\theta)\right]_{i,j}=u_{0}(t_{i},x_{j};\theta).
\end{align*}
We will denote a numerical computation that has been computed with
numerical step size $h$ and parameter value $\theta$ and then interpolated\footnote{Note that the interpolation step is performed with an $\mathcal{O}(h^{3})$
procedure while the finite difference schemes are $\mathcal{O}(h^{p})$
for $p\le2.$ This interpolation step should thus not alter other
convergence rates.} to $T^{M}\times X^{N}$ as the $M\times N$ matrix $U(h,\theta)$
with $(i,j)th$ entry
\begin{align*}
\left[U(h,\theta)\right]_{i,j}=u(t_{i},x_{j};h,\theta).
\end{align*}
Arrows on top of these sets will denote their vectorizations, \emph{e.g.},
the vector $\vec{U}_{0}(\theta)$ will denote the $MN\times1$ vectorization
of $U_{0}(\theta)$. We write $u_{0}(t,x)$ and $u(t,x;h)$ to denote
these functions on the domain $T\times X.$

The matrix $\nabla_{\theta}U_{0}(\theta)$ is the $MN\times k_{\theta}$
matrix vectorization of the gradient of the analytical solution with
respect to $\theta$ (also known as the sensitivities). The matrix
$\nabla_{\theta}U(h,\theta)$ will denote the numerically-computed
$MN\times k_{\theta}$ matrix for these sensitivity equations. The
vector $\vec{\epsilon}$ denotes the $MN\times1$ vector of realizations
of the Gaussian error terms.

We will perform our inverse problem for values of $h$ given by $h_{i}=\left(10\times2^{i-1}\right)^{-1},\ i=1,\ldots,7$.
For each value of $h$, we also use temporal step size $k=\lambda h$ for a value of $\lambda$ that
will satisfy the CFL condition, which is a necessary (but not sufficient)
condition for numerical stability. When describing a vector of step
sizes, we will let $\boldsymbol{h}=(h_{1},...,h_{7})^{T}$. In a slight
abuse of notation, we will write a vector of function values, $f(h)$,
at different step sizes as $f(\boldsymbol{h})=(f(h_{1}),...,f(h_{7}))^{T}.$

\subsection{Artificial Data Generation\label{subsec:Artificial-Data-Generation}}

We generate several artificial data sets from $U_{0}(\theta_{0})$
for this study. These data sets are created by adding Gaussian noise
to the analytical solution, written as the statistical model
\begin{equation}
y_{i,j}=u_{0}(t_{i},x_{j};\theta_{0})+\epsilon_{i,j},\ \epsilon_{i,j}\stackrel{i.i.d.}{\sim}\mathcal{N}(0,\eta^{2}),\ \ i=1,..,M,\ j=1,..,N\label{eq:data_form}
\end{equation}
for some ``true'' parameter value, $\theta_{0}\in\Theta.$ We will
generate data sets with different values of $N$ and $\eta$ for both
initial conditions, $\phi_{d}(x)$ and $\phi_{c}(x)$. Note that $M$
will be fixed at 6 for simplicity in all data sets considered: we
performed a similar analysis for data sets generated with larger values
of $M$ but found that the final results to be similar to the results
for increasing $N$ (results not shown). We choose $\theta_{0}=(0.3,0.5)^{T}$
for data sets where $\phi(x)=\phi_{d}(x)$ and $\theta_{0}=(0.3,0.4)^{T}$
for data sets where $\phi(x)=\phi_{c}(x)$. An example data set is
depicted against $u_{0}(t,x;\theta_{0})$ for $\eta^{2}=0.01,N=11$
in Figure ~\ref{fig:Artificial-data-plot}.

We will also perform the inverse problem for multiple data sets with
varying numbers of data points and data error levels. For $\phi(x)=\phi_{d}(x),$
we consider data sets for $N=\{11,30,51\}$ and $\eta=\{0,10^{-1},1.5\times10^{-1},2\times10^{-1},3\times10^{-1},5\times10^{-1},1\}.$
For $\phi(x)=\phi_{c}(x),$ we consider data sets for $N=\{11,31,51\}$
and $\eta=\{0,10^{-4},5\times10^{-4},10^{-3},10^{-2},5\times10^{-2},10^{-1},2\times10^{-1}\}.$
We will only show some results in the main text for ease of interpretation,
but all results for all data sets are provided in the supporting material.

\begin{figure}
\center{}\includegraphics[width=0.45\textwidth]{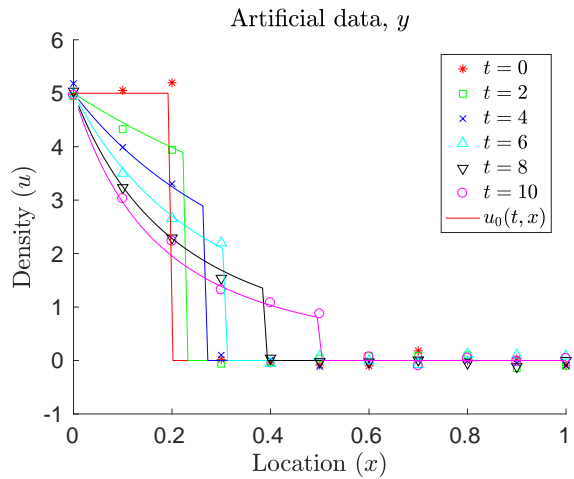}\caption{Artificial data from Equations (\ref{eq:adv_analtical}) and (\ref{eq:data_form})
for $\eta=0.1,N=11,M=6$, and $\phi(x)=\phi_{d}(x).$ The solid lines
denote the analytical solution given by Equation (\ref{eq:adv_analtical})
and the various markers denote the artificial data points. Red asterisks
denote $t=0,$ green squares denote $t=2,$ blue \emph{x}'s denote
$t=4,$ cyan triangles denote $t=6,$ black triangles denote $t=8,$
and magenta dots denote $t=10.$ \label{fig:Artificial-data-plot} }
\end{figure}

\subsection{Numerical Methods and Order of Convergence\label{subsec:Numerical-Methods}}

We will consider four commonly-used numerical schemes to approximate
the solution to Equation (\ref{eq:adv_eqn}). These four schemes are
the upwind, Lax-Wendroff, and Beam-Warming methods, as well as the
upwind method with flux limiters. The first three methods are discussed
and presented in the popular monograph by Leveque ~\citep{leveque_finite_2007},
and the final method is discussed in ~\citep[$\S 16.2$]{randall_j._leveque_conservation_1992}
and ~\citep{thackham_computational_2008}. 

A common practice in numerical analysis is to compute the order of
convergence for a numerical scheme. Guided by ~\citep{leveque_finite_2007},
we define the error for a numerical scheme as
\begin{align*}
E(h;\theta)=\|\vec{U}(h,\theta)-\vec{U}_{0}(\theta)\|_{1},
\end{align*}
where $\|\cdot\|_{1}$ denotes the 1-norm in $\text{\ensuremath{\mathbb{R}}}^{MN}$.
The upwind method is first order accurate when $u_{0}(t,x)$ is differentiable,
meaning that $E(h)\approx\mathcal{O}(h^{1})$ for $h$ sufficiently
small. The Lax-Wendroff and beam warming method are second order accurate
so that $E(h)\approx\mathcal{O}(h^{2})$ under similar assumptions. 

While these schemes are often referred to as first- or second-order
accurate, this is only true when the analytical solution, $u_{0}(t,x)$,
is continuous with respect to $x$. When $u_{0}(t,x)$ is discontinous,
the order for these schemes can be computed using the \emph{theory
of modified equations} (described in ~\citep[$\S$ 11]{randall_j._leveque_conservation_1992}).
This theory can show that the upwind method is of order 1/2, and the
Lax-Wendroff and Beam-Warming methods are of order 2/3 when $u_{0}(t,x)$
is discontinuous. This theory can also be used to demonstrate that
the upwind method will add numerical diffusion error when used to
approximate the solution to Equation (\ref{eq:adv_eqn}). Similarly,
the Lax-Wendroff and Beam-Warming methods will add numerical dispersion
error when used to approximate the solution to Equation (\ref{eq:adv_eqn}).
In both cases, the rates of diffusion or dispersion disappear as $h\rightarrow0$.
These numerical error patterns can be clearly seen in Figure ~\ref{fig:Numerical-solution-profiles}.

This information from the theory of modified equations has prompted
our use of the following definition for the order of convergence throughout
our study.
\begin{defn}
\label{Definition2-A-numerical}A numerical method has order of convergence
$p$ if, for $h$ small, 
\begin{align*}
u(t,x;h,\theta)\approx u_{0}(t,x;\theta)+h^{p}w(t,x;h)
\end{align*}
for some positive value, $p$, on all compact subsets of $\Theta_{comp}\subset\Theta$,
where $\|w(t,x;h)\|_{\infty}$ is uniformly bounded for small values
of $h$. Furthermore, for every $h$, $u(x,t;h,\theta):\Theta\rightarrow L^{1}(T\times X\rightarrow\mathbb{R})$
is continuous with respect to $\theta$. 
\end{defn}

Note that this definition is stronger than the standard definition
of numerical order of convergence, and it immediately implies $E(h,\theta)=\mathcal{O}(h^{p})$
so long as $T\times X$ is compact. For the first (second) order methods,
$w(t,x)$ represents numerical diffusion (dispersion) from the approximation
scheme. 

From the observation that $|u(t,x;h,\theta)-u_{0}(t,x;\theta)|\le Ch^{p}$
for all $t,x$ from the above definition,
 where ${\displaystyle C=\sup_{(t,x)\in T\times X}|w(t,x;h)|}$,
we will rewrite the above equation as
\begin{equation}
u(t,x;h,\theta)=u_{0}(t,x;\theta)+\mathcal{O}(h^{p})\label{eq:defn_2}
\end{equation}
for ease of notation. To estimate the order of convergence for a numerical
scheme throughout this study, we will find the best-fit line for the
natural log of the error, $\ln(E(\boldsymbol{h}))$, against $\ln(\boldsymbol{h})$.
The slope of this line will estimate $p$, which we will denote as
the \emph{numerical order of convergence}.

Flux limiters are a popular tool to aid numerical schemes for advection
equations with discontinuous solutions ~\citep{sweby_high_1984,thackham_computational_2008}.
When flux limiters are used, the spatial gradient at each computational
point is estimated at each time point. These estimations are used
to make the numerical scheme approximately second-order accurate near
smooth spatial points and first order accurate near points of discontinuity.
An upwind scheme with flux limiters thus prevents dispersive oscillations
from propagating near the discontinuity, and instead allows a small amount of
numerical diffusion in this region. In this study, we will use the
Van-Leer flux limiter ~\citep{randall_j._leveque_conservation_1992}.

In Table ~\ref{tab:order_estimate_phi_front}, we depict the calculated
values of $p$ for $\phi(x)=\phi_{d}(x)$ and $\theta=\theta_{0}$
and see that our calculated numerical order of convergence for the
upwind scheme is consistent with the theory (close to 1/2), but the
order for the Lax-Wendfroff and Beam-Warming schemes are smaller and
larger than expected, respectively ($p$ is calculated as 0.4737 for
Lax-Wendfroff and 0.7876 for Beam-Warming, when the theory suggests these
both should be 2/3)\footnote{Note that the Beam-Warming method uses one-sided derivative approximations
from the direction where information is coming from in its computations,
whereas the Lax-Wendroff method uses centered difference approximations.
The one-sided approximations are more accurate than centered difference
approximations for advection equations, which likely explains why
$p>2/3$ for the Beam-Warming method and why $p<2/3$ for the Lax-Wendroff
method. We depict simulations of both of these numerical schemes in
Figure ~\ref{fig:Numerical-solution-profiles} and indeed see that
the Lax-Wendfroff method is much more dispersive (\emph{i.e.}, less
accurate) than the Beam-Warming method.}. The upwind scheme with flux limiters has a calculated numerical
order of convergence of 0.9570. To the best of our knowledge, there
are no analytical results for the order of the upwind scheme with
flux limiters when calculating a discontinuous solution. We show in
Table S1 in the supporting material that the calculated numerical
orders of convergence for $\phi(x)=\phi_{c}(x)$ are consistent with
theory for continuous solutions. Here, we calculate the order of convergence
for the upwind with flux limiters scheme to be 0.9183.

\section{Asymptotic Properties of the Inverse Problem \label{sec:Numerical-Cost-Function}}

For a given data set, $Y,$ and (analytical) mathematical model, $U_{0}(\theta)$,
the ordinary least squares (OLS) cost function given by
\begin{equation}
J^{M,N}(\theta)=\dfrac{1}{MN}\sum_{i=1}^{M}\sum_{j=1}^{N}(y_{i,j}-u_{0}(t_{i},x_{j};\theta))^{2},\label{eq:OLS_cost_no_num}
\end{equation}
is a means to estimate the disparity between the data and model. In
the inverse problem framework, one may compute an estimate, $\hat{\theta}_{OLS}^{M,N}$,
of the true parameter vector, $\theta_{0}$, by finding
\begin{align*}
\hat{\theta}_{OLS}^{M,N}=\arg\min_{\theta\in\Theta_{ad}}J^{M,N}(\theta).
\end{align*}

In practice, we do not know $U_{0}(\theta)$ and must approximate it
with the numerical computation, $U(h,\theta)$. In this case, the \emph{numerical}
OLS cost function, 
\begin{equation}
J^{M,N}(h,\theta)=\dfrac{1}{MN}\sum_{i=1}^{M}\sum_{j=1}^{N}(y_{i,j}-u(t_{i},x_{j};h,\theta))^{2},\label{eq:OLS_cost_num}
\end{equation}
is a means to estimate the disparity between the data and numerical
computation. In this work, we will compute estimates, $\hat{\theta}_{OLS}^{M,N}(h)$,
of the true parameter vector, $\theta_{0}$, by finding
\begin{align*}
\hat{\theta}_{OLS}^{M,N}(h)=\arg\min_{\theta\in\Theta_{ad}}J^{M,N}(h,\theta),
\end{align*}
where $\Theta_{ad}\subset \Theta$ denotes the space of admissible parameter values. For the optimization in this study, we used an interior point algorithm
as implemented in MATLAB's \textbf{fmincon} function to find $\hat{\theta}_{OLS}^{M,N}(h)$. 

In the rest of this section, we will discuss asymptotic properties
of the inverse problem as the number of data points increases ($M,N\rightarrow\infty$)
and the step size decreases ($h\rightarrow0$). In Section ~\ref{subsec:Theory-on_thet_ols},
we discuss the asymptotic distribution of the estimator, $\hat{\theta}_{OLS}^{M,N}(h)$,
and in Section ~\ref{subsec:Convergence-of-J_ols}, we discuss the
convergence of the numerical cost function, $J^{M,N}(h,\hat{\theta}_{OLS}^{M,N}(h)).$

\subsection{Theory of $\hat{\theta}_{OLS}^{M,N}(h)$\label{subsec:Theory-on_thet_ols}}

The asymptotic properties of $\theta_{OLS}^{M,N}$ have been widely
discussed and are provided in Theorem 2.1 from ~\citep{seber_nonlinear_1988}
(which is stated in ~\ref{sec:Theory-on-theta_ols_h} for
convenience). In ~\citep{banks_statistical_1990}, it is further shown
that $\hat{\theta}_{OLS}^{M,N}(h)$ is a consistent estimator, meaning
that $\hat{\theta}_{OLS}^{M,N}(h)\rightarrow\theta_{0}$ almost surely
as $M,N\rightarrow\infty$ and $h\rightarrow0$. This proof requires
$\Theta_{ad}$ to be a compact subset of $\Theta$; accordingly, we
have chosen $\Theta_{ad}=[0,10]\times[0,10]$. This proof also requires
the following reasonable assumptions.
\begin{enumerate}
\item[(A1)] The finite measures $\chi$ and $\nu$ exist on $X$ and $T$
such that 
\begin{align*}
\dfrac{1}{MN}\sum_{i=1}^{M}\sum_{j=1}^{N}v(t_{i},x_{j})\rightarrow\int_{X}\int_{T}v(t,x;\theta)d\nu(t)d\chi(x)
\end{align*}
for any $v\in L^{1}(T\times X\rightarrow\mathbb{R})$ as $M,N\rightarrow\infty$. 
\item[(A2)]  The functional
\begin{align*}
J^{*}(\theta)=\int_{X}\int_{T}(u_{0}(t,x;\theta_{0})-u_{0}(t,x;\theta))^{2}d\nu(t)d\chi(x)
\end{align*}
has a unique minimizer in $\Theta_{ad}$ at $\theta_{0}.$
\end{enumerate}
The theorem for $\theta_{OLS}^{M,N}$ in ~\citep{seber_nonlinear_1988}
does not account for numerical errors in the model solution while
the theory for $\hat{\theta}_{OLS}^{M,N}(h)$ in ~\citep{banks_statistical_1990}
does not consider the implications of the numerical order of solution
convergence. 

We now state our main theoretical result on the behavior of $\theta_{OLS}^{M,N}(h)$
as numerical accuracy increases. The following corollary extends the
above theory to account for the fact that the solution to the PDE
model is being approximated with an order $p$ scheme.
\begin{cor}
\label{cor:2}Consider a numerical scheme for a differential equation
that is order $p$ accurate for $u_{0}(t,x)$ and $\nabla_{\theta}u_{0}(t,x;\theta)$.
Under the assumptions (A1) and (A2), we have the asymptotic distribution
for $\theta_{OLS}^{M,N}(h)$ as $M,N\rightarrow\infty$ and $h\rightarrow0$
given by
\begin{align*}
\hat{\theta}_{OLS}^{M,N}(h)\sim\mathcal{N}(\theta_{0},\eta^{2}V_{h}).
\end{align*}
The entries of $V_{h}$ are $\mathcal{O}(h^{p})$ convergent to the
entries of $V=(\nabla_{\theta}U_{0}(\theta_{0})^{T}\nabla_{\theta}U_{0}(\theta_{0}))^{-1}$,
which is the covariance matrix in the absence of numerical error.
\end{cor}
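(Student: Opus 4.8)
The plan is to treat $\hat{\theta}_{OLS}^{M,N}(h)$ as an M-estimator defined by the normal equations for the numerical cost $J^{M,N}(h,\theta)$ and to linearize these equations about $\theta_0$, following the proof of Seber and Wild's Theorem~2.1 but carrying the numerical model $u(t,x;h,\theta)$ in place of $u_0$. Writing the stationarity condition $\nabla_\theta U(h,\hat\theta)^T\big(\vec{Y} - \vec{U}(h,\hat\theta)\big) = 0$ and Taylor-expanding both $\vec{U}(h,\hat\theta)$ and $\nabla_\theta U(h,\hat\theta)$ about $\theta_0$, I would obtain, to leading order,
\[
\hat{\theta}_{OLS}^{M,N}(h) - \theta_0 = V_h\, \nabla_\theta U(h,\theta_0)^T\big(\vec{Y} - \vec{U}(h,\theta_0)\big) + \text{(h.o.t.)},
\]
with $V_h = \big(\nabla_\theta U(h,\theta_0)^T \nabla_\theta U(h,\theta_0)\big)^{-1}$. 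The linearization about $\theta_0$ is legitimate because the Banks--Fitzpatrick consistency result already guarantees $\hat{\theta}_{OLS}^{M,N}(h) \to \theta_0$ in the joint limit, so the expansion point is correct asymptotically and the higher-order remainder is controlled by the $\theta$-continuity built into Definition~\ref{Definition2-A-numerical}.

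The next step is to decompose the driving term using the data model (\ref{eq:data_form}) and Definition~\ref{Definition2-A-numerical}. Since the data are generated by $u_0(\theta_0)$ while the fit uses $u(h,\theta_0)$,
\[
\vec{Y} - \vec{U}(h,\theta_0) = \vec{\epsilon} + \big(\vec{U}_0(\theta_0) - \vec{U}(h,\theta_0)\big) = \vec{\epsilon} + \mathcal{O}(h^p),
\]
where the deterministic $\mathcal{O}(h^p)$ term follows from order-$p$ accuracy of the scheme for $u_0$. The $\vec{\epsilon}$ contribution is a linear image of i.i.d.\ $\mathcal{N}(0,\eta^2)$ noise, hence Gaussian with mean zero and covariance $\eta^2 V_h \nabla_\theta U(h,\theta_0)^T \nabla_\theta U(h,\theta_0) V_h = \eta^2 V_h$, while the $\mathcal{O}(h^p)$ term contributes only a deterministic bias $V_h \nabla_\theta U(h,\theta_0)^T \mathcal{O}(h^p) = \mathcal{O}(h^p)$ that vanishes as $h\to 0$ and recenters the distribution at $\theta_0$. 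For the convergence of $V_h$, I would invoke the \emph{second} half of the hypothesis—order $p$ accuracy for the sensitivities—giving $\nabla_\theta U(h,\theta_0) = \nabla_\theta U_0(\theta_0) + \mathcal{O}(h^p)$ entrywise, hence $\nabla_\theta U(h,\theta_0)^T \nabla_\theta U(h,\theta_0) = \nabla_\theta U_0(\theta_0)^T \nabla_\theta U_0(\theta_0) + \mathcal{O}(h^p)$. Since (A2) makes the limiting information matrix $V^{-1}$ nonsingular, the perturbation expansion $V_h = V - V\big(\nabla_\theta U(h,\theta_0)^T\nabla_\theta U(h,\theta_0) - V^{-1}\big)V + \cdots$ for the matrix inverse then yields $V_h = V + \mathcal{O}(h^p)$ entrywise.

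The main obstacle is the careful handling of the two coupled limits $M,N\to\infty$ and $h\to 0$. Seber and Wild's theorem produces asymptotic normality for fixed $h$, but centered at the minimizer $\theta_h^\ast$ of the limiting numerical cost functional (the analog of $J^\ast$ with $u$ in place of $u_0$), which is biased away from $\theta_0$ by model misspecification. I would close this gap with two ingredients: the uniform-in-$\theta$ order-$p$ bound from Definition~\ref{Definition2-A-numerical}, which forces the numerical cost functional to converge uniformly on $\Theta_{ad}$ to $J^\ast$ so that $\theta_h^\ast = \theta_0 + \mathcal{O}(h^p)$ by (A2); and the Banks--Fitzpatrick consistency, which certifies that the joint limit sends $\hat{\theta}_{OLS}^{M,N}(h)$ to $\theta_0$. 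Establishing that the deterministic $\mathcal{O}(h^p)$ bias is negligible relative to the $\mathcal{O}((MN)^{-1/2})$ stochastic fluctuation—so that recentering the Gaussian limit at $\theta_0$ rather than $\theta_h^\ast$ is justified—is the delicate point, and it is precisely what the assumed smoothness and the nonsingularity from (A2) are there to supply.
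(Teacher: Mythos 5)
Your proposal is correct and follows essentially the same route as the paper: linearize $\vec{U}(h,\theta)$ about $\theta_0$ (justified by Banks--Fitzpatrick consistency), reduce to a linear least-squares problem with design matrix $\nabla_\theta U(h,\theta_0)=\nabla_\theta U_0(\theta_0)+\mathcal{O}(h^p)$, read off Gaussianity with covariance $\eta^2 V_h$, and obtain entrywise $\mathcal{O}(h^p)$ convergence of $V_h$ to $V$ by perturbing the matrix inverse (the paper cites Higham where you use a Neumann expansion). If anything, your treatment of the deterministic $\vec{U}_0(\theta_0)-\vec{U}(h,\theta_0)=\mathcal{O}(h^p)$ offset is slightly more careful than the paper's, which discards it as ``independent of $\theta$'' rather than tracking it as a vanishing $\mathcal{O}(h^p)$ bias through the normal equations as you do.
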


\begin{proof}
When $\theta$ is near $\theta_{0},$ we Taylor expand to see 
\begin{align*}
\vec{U}(h,\theta) & \approx\vec{U}(h,\theta_{0})+\nabla_{\theta}U(h,\theta_{0})[\theta-\theta_{0}],
\end{align*}
and then use our assumptions on the numerical orders of convergence
to find
\begin{align*}
\vec{U}(h,\theta)\approx\vec{U}_{0}(\theta_{0})+\mathcal{O}(h^{p})+[\nabla_{\theta}U_{0}(\theta_{0})+\mathcal{O}(h^{p})][\theta-\theta_{0}].
\end{align*}
The numerical cost function then takes the form
\begin{align*}
J_{OLS}(h,\theta) & =\|\vec{Y}-\vec{U}(h,\theta)\|^{2}\\
 & \approx\left\Vert \vec{Y}-\vec{U}_{0}(\theta_{0})-\mathcal{O}(h^{p})-[\nabla_{\theta}U_{0}(\theta_{0})+\mathcal{O}(h^{p})][\theta-\theta_{0}]\right\Vert ^{2}.
\end{align*}
The first $\mathcal{O}(h^{p})$ term is independent of $\theta$ on
$\Theta_{ad}$, so minimizing $J_{OLS}(h,\theta)$ is equivalent to
minimizing 
\begin{align*}
\|\vec{Z}-X_{h}\beta\|^{2},
\end{align*}
where $\vec{Z}=\vec{Y}-\vec{U}_{0}(\theta_{0})$, $X_{h}=\nabla_{\theta}U_{0}(t,x;\theta_{0})+\mathcal{O}(h^{p})$,
and $\beta=\theta-\theta_{0}$. The above has the minimizer 
\begin{equation}
\hat{\beta}=\left(X_{h}^{T}X_{h}\right)^{-1}X_{h}^{T}\vec{Z},\label{eq:q_ols_eps-1}
\end{equation}
which is normally distributed because it is a linear combination of
normal random variables. Assumptions $(A1)$ and $(A2)$ ensure that
$\hat{\theta}_{OLS}^{M,N}(h)$ is consistent for estimating $\theta_{0}$
as $h\rightarrow0$ and $M,N\rightarrow\infty$ ~\citep{banks_statistical_1990}.
Once $\hat{\theta}_{OLS}^{M,N}(h)$ is close to $\theta_{0},$ we
have that
\begin{align*}
\hat{\theta}_{OLS}^{M,N}(h)\approx\mathcal{N}(\theta_{0},\eta^{2}(X_{h}^{T}X_{h})^{-1}),
\end{align*}
where the mean and covariance can be calculated directly from their
definitions.

Determining the convergence (in any matrix norm) of $V_{h}=(X_{h}^{T}X_{h})^{-1}$
to the inverse of $\nabla_{\theta}U_{0}(\theta_{0})^{T}\nabla_{\theta}U_{0}(\theta_{0})$
is a difficult problem. However, by using a result from the analysis
of numerical algorithms ~\citep{higham_accuracy_1996}, we can draw
some conclusions about the individual entries of $V_{h}$. Consider
the $(i,j)th$ entry of $X_{h}^{T}X_{h}:$
\begin{align*}
|[X_{h}^{T}X_{h}]_{i,j}| & =|(\nabla_{\theta}U_{0}(t_{i},x_{j};\theta_{0})+\mathcal{O}(h^{p}))^{T})(\nabla_{\theta}U_{0}(t_{i},x_{j};\theta_{0})+\mathcal{O}(h^{p}))|\\
 & =|\nabla_{\theta}U_{0}(t_{i},x_{j};\theta_{0})^{T}\nabla_{\theta}U_{0}(t_{i},x_{j};\theta_{0})+\mathcal{O}(h^{p})\nabla_{\theta}U_{0}(t_{i},x_{j};\theta_{0}))+\mathcal{O}(h^{2p})|,
\end{align*}
 meaning that this entry converges to its corresponding entry of $\nabla_{\theta}U_{0}(\theta_{0})^{T}\nabla_{\theta}U_{0}(\theta_{0})$
with an order of convergence $p.$ Then, using results
from ~\citep[$\S$ 13.1]{higham_accuracy_1996}, we can show 
\begin{align*}
 & \left|\left[(\nabla_{\theta}U_{0}(\theta_{0})^{T}\nabla_{\theta}U_{0}(\theta_{0}))^{-1}-(X_{h}^{T}X_{h})^{-1}\right]{}_{i,j}\right| \le \\
& \mathcal{O}(h^{p})\left|\left[(\nabla_{\theta}U_{0}(\theta_{0})^{T}\nabla_{\theta}U_{0}(\theta_{0}))^{-1}\right]_{i,j}\right|\left|\left[ \nabla_{\theta}U_{0}(\theta_{0})^{T}\nabla_{\theta}U_{0}(\theta_{0})\right]_{i,j}\right|\left|\left[(\nabla_{\theta}U_{0}(\theta_{0})^{T}\nabla_{\theta}U_{0}(\theta_{0}))^{-1}\right]_{i,j}\right|\\ & +\mathcal{O}(h^{2p}).
\end{align*}
Thus, each entry of $V_{h}$ will will converge to
its corresponding entry of $V$ as $\mathcal{O}(h^{p})$.
\end{proof}

\subsection{Convergence of $J^{M,N}(h,\theta)$\label{subsec:Convergence-of-J_ols}}

The least squares cost function from Equation (\ref{eq:OLS_cost_num})
is widely used for inverse problems ~\citep{banks_mathematical_2009}.
In this section, we discuss the asymptotic properties of this function
as $h\rightarrow0$ and $M,N\rightarrow\infty$ to elucidate our results
in future sections.

Observe that by combining Equations (\ref{eq:data_form}) and (\ref{eq:OLS_cost_num}),
the cost function can be rewritten as 
\begin{align}
J^{M,N}(h,\theta) & =\dfrac{1}{MN}\sum_{i,j=1}^{M,N}[u_{0}(t_{i},x_{j};\theta_{0})+\epsilon_{i,j}-u(t_{i},x_{j};h,\theta)+u_{0}(t_{i},x_{j};\theta)-u_{0}(t_{i},x_{j};\theta)]^{2}\label{eq:cost_break_down}\\
 & =A+B+C+D+E+F,\nonumber 
\end{align}
where 
\begin{align}
A & =\dfrac{1}{MN}\sum_{i,j=1}^{M,N}\mathcal{\epsilon}_{i,j}^{2}\nonumber \\
B & =\dfrac{1}{MN}\sum_{i,j=1}^{M,N}[u_{0}(t_{i},x_{j};\theta_{0})-u_{0}(t_{i},x_{j};\theta)]^{2}\nonumber \\
C & =\dfrac{1}{MN}\sum_{i,j=1}^{M,N}[u_{0}(t_{i},x_{j};\theta)-u(t_{i},x_{j};h,\theta)]^{2}\nonumber \\
D & =\dfrac{2}{MN}\sum_{i,j=1}^{M,N}\mathcal{\epsilon}_{i,j}(u_{0}(t_{i},x_{j};\theta_{0})-u_{0}(t_{i},x_{j};\theta))\nonumber \\
E & =\dfrac{2}{MN}\sum_{i,j=1}^{M,N}\mathcal{\epsilon}_{i,j}(u_{0}(t_{i},x_{j};\theta)-u(t_{i},x_{j};h,\theta))\nonumber \\
F & =\dfrac{2}{MN}\sum_{i,j=1}^{M,N}[(u_{0}(t_{i},x_{j};\theta)-u(t_{i},x_{j};h,\theta))(u_{0}(t_{i},x_{j};\theta_{0})-u_{0}(t_{i},x_{j};\theta))]\label{eq:J_components}
\end{align}
We thus observe that the numerical cost function can be broken down
into six separate terms, each of which converges. The two following lemmas discuss
 the asymptotic limits and orders of convergence for terms $A$ through $F$
 as data increases and as numerical accuracy increases.
\begin{lem}
\label{lem:num_conv}If the numerical method is order $p$ accurate
for $u_{0}(t,x)$ and $\nabla_{\theta}u_{0}(t,x)$, then the terms
A-F from Equation (\ref{eq:J_components}) will behave as follows
as $h\rightarrow0$:

\begin{align*}
A\approx\mathcal{O}(1),\ B\approx\mathcal{O}(h^{p}),\ C\approx\mathcal{O}(h^{2p}),\ D\approx\mathcal{O}(h^{p/2}),\ E\approx\mathcal{O}(h^{p}),\ F\approx\mathcal{O}(h^{3p/2}).
\end{align*}
\end{lem}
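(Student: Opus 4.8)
The plan is to treat the six terms according to which of three quantities they are built from: the measurement noise $\epsilon_{i,j}$, the numerical solution error $u_0(t_i,x_j;\theta)-u(t_i,x_j;h,\theta)$, and the model discrepancy $u_0(t_i,x_j;\theta_0)-u_0(t_i,x_j;\theta)$ induced by evaluating at the numerical minimizer $\theta=\hat\theta_{OLS}^{M,N}(h)$. Equation (\ref{eq:defn_2}) controls the first difference uniformly (it is $\mathcal{O}(h^p)$), the statistical model (\ref{eq:data_form}) controls the noise, and the consistency/convergence conclusion of Corollary \ref{cor:2} controls how fast $\hat\theta_{OLS}^{M,N}(h)\to\theta_0$. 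I would first dispose of $A$ and $C$ directly, then obtain $D$, $E$, and $F$ from $A$, $B$, $C$ by Cauchy--Schwarz, leaving a single genuine estimate, that of $B$, as the heart of the argument.

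For the direct terms: $A=\frac{1}{MN}\sum\epsilon_{i,j}^2$ carries no $h$-dependence and, by the strong law of large numbers, converges to $\eta^2$ as $M,N\to\infty$, so $A=\mathcal{O}(1)$. For $C=\frac{1}{MN}\sum(u_0-u)^2$ I substitute (\ref{eq:defn_2}) to replace each summand by $(\mathcal{O}(h^p))^2$, so the average is $\mathcal{O}(h^{2p})$. The three cross terms then follow at no extra cost from the discrete Cauchy--Schwarz inequality: writing each of $D$, $E$, $F$ as $\frac{2}{MN}\sum a_{i,j}b_{i,j}$ gives $|D|\le 2\sqrt{A}\,\sqrt{B}$, $|E|\le 2\sqrt{A}\,\sqrt{C}$, and $|F|\le 2\sqrt{B}\,\sqrt{C}$. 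Granting the orders of $A$, $B=\mathcal{O}(h^p)$, and $C$, these bounds immediately yield $E=\mathcal{O}(h^p)$, $D=\mathcal{O}(h^{p/2})$, and $F=\mathcal{O}(h^{3p/2})$, matching the claimed rates.

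It therefore remains to prove $B=\frac{1}{MN}\sum(u_0(t_i,x_j;\theta_0)-u_0(t_i,x_j;\hat\theta_{OLS}^{M,N}(h)))^2=\mathcal{O}(h^p)$, equivalently that the averaged model discrepancy has root-mean-square size $\mathcal{O}(h^{p/2})$. My plan here is a balance (comparison) argument built on optimality: since $\hat\theta_{OLS}^{M,N}(h)$ minimizes $J^{M,N}(h,\cdot)$, we have $J^{M,N}(h,\hat\theta_{OLS}^{M,N}(h))\le J^{M,N}(h,\theta_0)$. At $\theta=\theta_0$ the terms $B$, $D$, $F$ vanish identically, and the estimates just established give $J^{M,N}(h,\theta_0)=A+\mathcal{O}(h^p)$. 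Substituting the decomposition on the left and using the Cauchy--Schwarz bounds for its cross terms isolates $B$ in a scalar inequality; together with the Taylor expansion $u_0(\cdot;\theta_0)-u_0(\cdot;\hat\theta)\approx-\nabla_\theta U_0(\theta_0)(\hat\theta-\theta_0)$ and the order-$p$ accuracy of the sensitivities $\nabla_\theta U(h,\theta)$ assumed in the hypothesis, this is designed to deliver $\hat\theta_{OLS}^{M,N}(h)-\theta_0=\mathcal{O}(h^{p/2})$ and hence $B=\mathcal{O}(h^p)$.

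The hard part will be making this last step honest. As $h\to0$ with $M$ and $N$ fixed, the estimator does not converge to $\theta_0$: the noise contributes an $h$-independent, $\mathcal{O}(1/\sqrt{MN})$ statistical deviation to $\hat\theta$, so the naive inequality $|D|\le 2\sqrt{A}\sqrt{B}$ only returns the statistical floor $B=\mathcal{O}(1)$ rather than the numerical rate. Extracting the claimed $\mathcal{O}(h^p)$ requires disentangling this statistical error from the numerically induced bias --- for example by linearizing the normal equations $\nabla_\theta U(h,\hat\theta)^T(\vec{Y}-\vec{U}(h,\hat\theta))=0$, projecting off the noise component, and tracking only the $h$-dependent part --- and interpreting the estimate in the joint limit $h\to0$, $M,N\to\infty$ in which the lemma is stated. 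This is exactly the regime in which Corollary \ref{cor:2} applies, and I would lean on its convergence conclusions to close the gap; everything else in the lemma is bookkeeping around the single estimate for $B$.
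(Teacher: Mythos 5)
Your handling of $A$, $C$, and $E$ matches the paper's (direct for $A$ and $C$; Cauchy--Schwarz plus the order-$p$ bound for $E$), and your reduction of $D$ and $F$ to $\sqrt{A}\sqrt{B}$ and $\sqrt{B}\sqrt{C}$ is the same device the paper uses. The genuine gap is in $B$, and it propagates to $D$ and $F$. Your plan is to prove that $B$ is \emph{small}, i.e.\ $\hat\theta_{OLS}^{M,N}(h)-\theta_0=\mathcal{O}(h^{p/2})$ and hence $B=\mathcal{O}(h^p)$ in magnitude, via the comparison $J^{M,N}(h,\hat\theta)\le J^{M,N}(h,\theta_0)$. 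As you yourself observe, this cannot work for fixed $M,N$: the noise contributes an $h$-independent deviation to $\hat\theta$, so $B$ has an $\mathcal{O}(1)$ statistical floor, the comparison argument stalls at $B=\mathcal{O}(1)$, and your Cauchy--Schwarz bounds then return only $D=\mathcal{O}(1)$ and $F=\mathcal{O}(h^{p})$ rather than the claimed $\mathcal{O}(h^{p/2})$ and $\mathcal{O}(h^{3p/2})$. You flag this as ``the hard part'' and defer to Corollary \ref{cor:2}, but the deferral does not close the argument, because you are still trying to prove a statement (smallness of $B$) that is false in the regime where the lemma is applied.

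The paper escapes this by reading the lemma differently for the terms involving $\hat\theta$: for $B$, $D$, and $F$ the assertion is not about the \emph{size} of the term but about the \emph{rate at which the random variable converges to its $h\to 0$ limit}. Concretely, the paper Taylor expands $u_0(\cdot;\theta_0)-u_0(\cdot;\hat\theta)\approx -\nabla_\theta U_0(\theta_0)(\hat\theta-\theta_0)$ and then invokes the specific conclusion of Corollary \ref{cor:2} that $\hat\theta\sim\mathcal{N}(\theta_0,\eta^2 V_h)$ with each entry of $V_h$ converging to the corresponding entry of $V$ at rate $\mathcal{O}(h^p)$; the $h$-dependence of $B$ therefore enters only through $V_h-V$, giving convergence of $B$ (to a generally nonzero, $h$-independent limit) at rate $\mathcal{O}(h^p)$, and the analogous statements for the second factors in $D$ and $F$. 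No optimality comparison with $J^{M,N}(h,\theta_0)$ is needed. To repair your write-up you would need to (i) state explicitly that for $B$, $D$, $F$ the claimed order is a rate of convergence of the term as a function of $h$, not a bound on its magnitude, and (ii) replace the comparison argument by the Taylor expansion together with the $\mathcal{O}(h^p)$ convergence of the covariance $V_h\to V$ from Corollary \ref{cor:2}. As written, the central estimate of your proof remains open.
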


\begin{proof}
See ~\ref{subsec:Limits-ash}.
\end{proof}
\begin{lem}
\label{lem:data_conv} If the numerical method is order $p$ accurate
for $u_{0}(t,x)$ and $\nabla_{\theta}u_{0}(t,x)$, then the terms
A-F from Equation (\ref{eq:J_components}) will behave as follows
as $M,N\rightarrow0$:

$A$ will converge to 0 with order $\mathcal{O}(1/\sqrt{MN})$. $B$
will converge to the functional $J^{*}(\theta)$ with order $\mathcal{O}(1/(MN)).$
C is independent of $M$ and $N$. D will converge to 0 with order
$\mathcal{O}(1/(\sqrt{MN}))$. E will Converge to 0 with order $\mathcal{O}(1/(\sqrt{MN}))$.
$F$ will converge to an $\mathcal{O}(h^{p})$ term with order $\mathcal{O}(1/(MN))$.
\end{lem}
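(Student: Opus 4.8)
The plan is to split the six terms into two groups according to whether they involve the stochastic measurement error $\epsilon_{i,j}$. The terms $A$, $D$, and $E$ are (affine) functionals of the i.i.d.\ Gaussian errors, so I would treat them with elementary second-moment estimates together with a law-of-large-numbers / central-limit argument; the terms $B$, $C$, and $F$ are deterministic once $\theta$ and $h$ are fixed, so I would view them as Riemann sums and invoke assumption (A1) together with a quadrature error bound. Throughout, Lemma~\ref{lem:num_conv} and the pointwise bound $|u_0(t,x;\theta)-u(t,x;h,\theta)|\le Ch^p$ from Equation~(\ref{eq:defn_2}) supply the $h$-dependence of the constants.

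For the stochastic terms I would compute means and variances directly. Since $\epsilon_{i,j}\sim\mathcal{N}(0,\eta^2)$ i.i.d., $A$ has mean $\eta^2$ and variance $2\eta^4/(MN)$, so $A=\eta^2+\mathcal{O}(1/\sqrt{MN})$ in mean square; the statement should therefore be read in the centered sense, namely that the deviation of $A$ from its limit vanishes at rate $1/\sqrt{MN}$. Both $D$ and $E$ have mean zero, because $\epsilon_{i,j}$ is independent of the deterministic coefficient it multiplies, and their variances are $\tfrac{4\eta^2}{MN}B$ and $\tfrac{4\eta^2}{MN}C$ respectively. Using $B=\mathcal{O}(1)$ and $C=\mathcal{O}(h^{2p})$ from Lemma~\ref{lem:num_conv}, Chebyshev's inequality then yields $D=\mathcal{O}(1/\sqrt{MN})$ and $E=\mathcal{O}(h^{p}/\sqrt{MN})$, the latter collapsing to the stated $\mathcal{O}(1/\sqrt{MN})$ rate in $M,N$ once $h$ is held fixed. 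A central limit theorem would upgrade these to distributional statements if desired, and the strong law gives almost-sure convergence for $A$.

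For the deterministic terms I would regard each as an average of point values of an $L^1$ integrand and apply (A1). This identifies the limit of $B$ as $J^*(\theta)$; the limit of $F$ as $2\int_X\int_T (u_0(t,x;\theta)-u(t,x;h,\theta))(u_0(t,x;\theta_0)-u_0(t,x;\theta))\,d\nu(t)\,d\chi(x)$, which is $\mathcal{O}(h^{p})$ by Equation~(\ref{eq:defn_2}); and the limit of $C$ as a fixed integral of $(u_0-u)^2$ that is $\mathcal{O}(h^{2p})$ and does \emph{not} vanish as $M,N\to\infty$ --- this is the sense in which $C$ is independent of $M$ and $N$. The convergence rate in $M,N$ then comes from a Riemann-sum / quadrature error estimate on the uniform product grid $T^M\times X^N$.

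The hard part will be justifying the precise $\mathcal{O}(1/(MN))$ rate for the deterministic sums $B$ and $F$. For a $C^1$ integrand, an endpoint Riemann sum on a uniform product grid converges only at rate $\mathcal{O}(1/M+1/N)$, so obtaining a genuine product rate requires either extra smoothness and cancellation (a midpoint- or trapezoid-type reading of the grid with a tensor-product error expansion) or absorbing the weaker marginal rates into the stated bound. This is compounded for $\phi(x)=\phi_d(x)$, where the integrands inherit the spatial discontinuity of the solution and the quadrature rate degrades near the shock; controlling the contribution of the cells straddling the discontinuity is the most delicate estimate. A secondary, mostly expository, obstacle is to fix the mode of convergence for $A$, $D$, and $E$: I would phrase these as mean-square (equivalently in-probability) statements, with the quoted $\mathcal{O}(1/\sqrt{MN})$ denoting the root-mean-square fluctuation.
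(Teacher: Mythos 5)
Your proposal follows essentially the same route as the paper: the classical CLT (chi-squared representation) for $A$, assumption (A1) and a Riemann-sum reading for $B$, $C$, and $F$, and the uniform boundedness of $w(t,x;h)$ to pull the $\mathcal{O}(h^{p})$ factor out of $C$, $E$, and $F$. The one genuine difference is in $D$ and $E$: the paper sandwiches these between $\pm\frac{2}{MN}\sum\epsilon_{i,j}$ (resp.\ $\pm\frac{2h^{p}}{MN}\sum\epsilon_{i,j}$) using the boundedness of the deterministic coefficients and then applies the CLT to the bounding sums, whereas you compute the mean and variance of the weighted sums directly and apply Chebyshev. Your route is cleaner and actually more defensible, since the paper's pointwise sandwich does not hold term-by-term for signed $\epsilon_{i,j}$ multiplied by coefficients of varying sign, while the second-moment computation gives the $\mathcal{O}(1/\sqrt{MN})$ fluctuation (and the sharper $\mathcal{O}(h^{p}/\sqrt{MN})$ for $E$) with no such issue. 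Your caution about the $\mathcal{O}(1/(MN))$ quadrature rate for $B$ and $F$ is also well placed: the paper simply asserts that the convergence is ``identical to a first order Riemann sum'' and assigns it rate $\mathcal{O}(1/(MN))$, without addressing the $\mathcal{O}(1/M+1/N)$ rate one would normally expect on a product grid or the degradation near the discontinuity of $\phi_{d}$; the paper offers no argument you are missing there. Finally, your reading of $A$ as converging to $\eta^{2}$ (with the $\mathcal{O}(1/\sqrt{MN})$ referring to the centered fluctuation) is consistent with the paper's proof and with Table \ref{tab:Asymptotic-limits-for}; the ``converge to 0'' in the lemma statement is evidently a typo.
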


\begin{proof}
See ~\ref{subsec:Limits-asMN}.
\end{proof}
These two lemmas are summarized in Table ~\ref{tab:Asymptotic-limits-for}. 

\begin{table}
\centering{}%
\begin{tabular}{|c|c|c|}
\hline 
\multirow{2}{*}{} & \multirow{2}{*}{\textbf{Asymptotic Properties ($h\rightarrow0$)}} & \multirow{2}{*}{\textbf{Asymptotic Properties (}$M,N\rightarrow\infty$\textbf{)}}\tabularnewline
 &  & \tabularnewline
\hline 
\multirow{2}{*}{A} & \multirow{2}{*}{$\mathcal{O}(1)$} & \multirow{2}{*}{Converges to $\eta^{2}$ with order $\mathcal{O}(1/\sqrt{MN})$}\tabularnewline
 &  & \tabularnewline
\hline 
\multirow{2}{*}{B} & \multirow{2}{*}{$\mathcal{O}(h^{p})$} & \multirow{2}{*}{Converges to $J^{*}(\theta)$ with order $\mathcal{O}(1/(MN))$}\tabularnewline
 &  & \tabularnewline
\hline 
\multirow{2}{*}{C} & \multirow{2}{*}{$\mathcal{O}(h^{2p})$} & \multirow{2}{*}{independent of $M,N$}\tabularnewline
 &  & \tabularnewline
\hline 
\multirow{2}{*}{D} & \multirow{2}{*}{$\mathcal{O}(h^{p/2})$} & \multirow{2}{*}{Converges to 0 with order $\mathcal{O}(1/(\sqrt{MN}))$}\tabularnewline
 &  & \tabularnewline
\hline 
\multirow{2}{*}{E} & \multirow{2}{*}{$\mathcal{O}(h^{p})$} & \multirow{2}{*}{Converges to 0 with order $\mathcal{O}(1/(\sqrt{MN}))$}\tabularnewline
 &  & \tabularnewline
\hline 
\multirow{2}{*}{F} & \multirow{2}{*}{$\mathcal{O}(h^{3p/2})$} & \multirow{2}{*}{Converges to an $\mathcal{O}(h^{p})$ term with order $\mathcal{O}(1/(MN))$}\tabularnewline
 &  & \tabularnewline
\hline 
\end{tabular}\caption{Asymptotic limits for the six terms comprising the numerical cost
function given by equation (\ref{eq:OLS_cost_num}) as numerical accuracy increases ($h\rightarrow 0$) and as the number of data points increases ($M,N\rightarrow \infty$). \label{tab:Asymptotic-limits-for}}
\end{table}

\section{Inverse Problem Results \label{sec:Inverse-Problem-Results}}

In this section, we present and discuss the numerical results for our inverse problems
 as $h\rightarrow0$ and $N\rightarrow\infty.$
\footnote{We do not present the results for $M\rightarrow\infty$ as they are
identical to those presented here for $N\to\infty$.} In Section ~\ref{subsec:Numerical-Simulation-Profiles}, we discuss
the profiles of the numerical simulations that led to these results.
In Sections ~\ref{subsec:Behavior-of-Numerical} and ~\ref{subsec:Behavior-of-the-OLS-estimator},
we discuss the asymptotic behavior of $J^{M,N}(h,\hat{\theta}_{OLS}^{M,N}(h))$
and $\hat{\theta}_{OLS}^{M,N}(h)$, respectively.

\subsection{Numerical Simulation Profiles\label{subsec:Numerical-Simulation-Profiles}}

In Figure ~\ref{fig:Numerical-solution-profiles}, we depict a selection
of best-fit plots of $u(t,x;h,\hat{\theta}_{OLS}^{M,N}(h))$ against
their corresponding artificial data sets (for all four schemes) for
$\phi(x)=\phi_{d}(x).$ As expected, the first order upwind scheme
is diffusive, and the second order methods are dispersive. The Lax-Wendroff
method is excessively dispersive, as it displays many small oscillations
but still fits the general trend of the data. The Beam-Warming method
yields more accurate profile simulations than the Lax-Wendroff method, but
it does have a negative portion just after the front. The upwind method
with flux limiters provides the most realistic profile, as it maintains
a sharp front with a nonnegative profile.

\begin{sidewaysfigure}
\vspace{15cm}\includegraphics[scale=0.5]{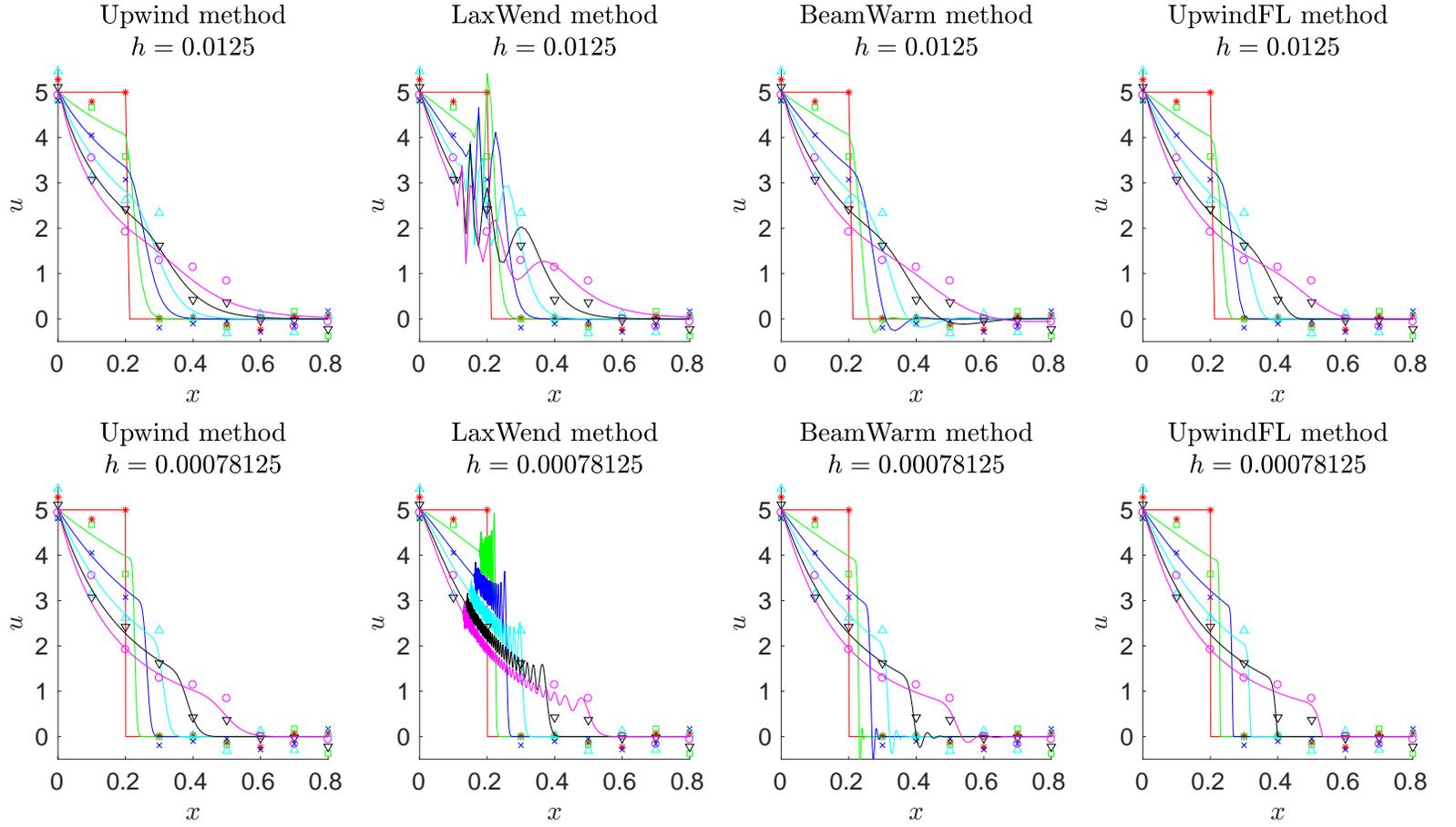}\caption{Numerical solution profiles (solid lines) plotted against artificial
data (dots) for the four schemes considered when $\phi(x)=\phi_{d}(x)$
for two different step sizes. Red asterisks denote $t=0,$ green squares
denote $t=2,$ blue \emph{x}'s denote $t=4,$ cyan triangles denote
$t=6,$ black triangles denote $t=8,$ and magenta dots denote $t=10.$
The solid curves denote $u(t,x;h,\hat{\theta}_{OLS}^{M,N}(h))$ at
these time points. In the titles, ``LaxWend'' corresponds to the
Lax-Wendroff method, ``BeamWarm'' corresponds to the Beam-Warming
Method, and ``UpwindFL'' corresponds to the Upwind method with flux
limiters. \label{fig:Numerical-solution-profiles}}
\end{sidewaysfigure}

\subsection{Behavior of Numerical Cost Function \label{subsec:Behavior-of-Numerical}}

In Figure ~\ref{fig:Plots-of-J_main_text}, we depict log-log plots
of $J^{M,N}(\boldsymbol{h},\hat{\theta}_{OLS}^{M,N}(\boldsymbol{h}))$
against $\boldsymbol{h}$ for an initial condition of $\phi(x)=\phi_{d}(x)$.
Here, we observe that the cost function converges to $\eta^{2}$ as
$h\rightarrow0$, which is consistent with the theory from Table ~\ref{tab:Asymptotic-limits-for}.
This observation suggests that numerical error dominates over experimental
error until $J^{M,N}(h,\hat{\theta}_{OLS}^{MN}(h))$ reaches $\eta^{2}$,
at which point experimental error becomes the dominant term in $J^{M,N}(h,\hat{\theta}_{OLS}^{MN}(h))$.
We thus suggest that if $J^{M,N}(h,\hat{\theta}_{OLS}^{M,N}(h))$
decreases with $h$ , then one can further decrease the value of the cost
function with continued grid refinement. We depict the log-log plots
of $J^{M,N}(\boldsymbol{h},\hat{\theta}_{OLS}^{M,N}(\boldsymbol{h}))$
for all data sets considered in the supporting material in Figure
S2 for $\phi(x)=\phi_{c}(x)$ and in Figure S12 for $\phi(x)=\phi_{d}(x)$;
these figures support the observations that $J^{M,N}(h,\hat{\theta}_{OLS}^{M,N}(h))$
converges to $\eta^{2}$ as $h\rightarrow0$. We also observe, as
expected from Table ~\ref{tab:Asymptotic-limits-for}, that $J^{M,N}(h,\hat{\theta}_{OLS}^{M,N}(h))$
gets closer to $\eta^{2}$ as $M,N\rightarrow\infty.$ If one is concerned
with accurately estimating $\eta^{2},$ then they can use $J^{M,N}(h,\hat{\theta}_{OLS}^{M,N}(h))$
with more certainty for large values of $M,N$.

\begin{figure}
\centering{}\-\-\-\includegraphics[width=0.99\textwidth]{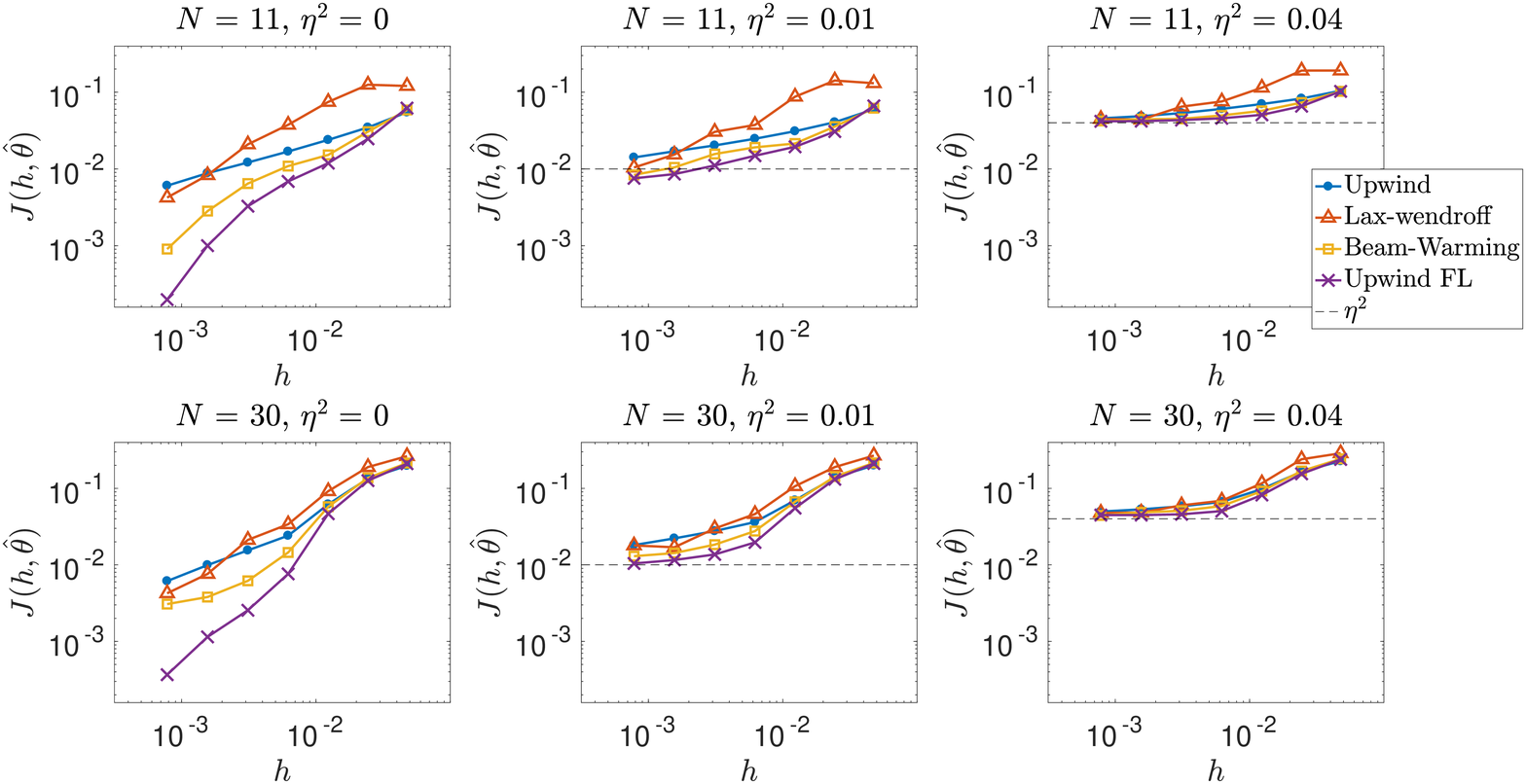}\caption[Plots of the numerical cost function for $\phi(x)=\phi_{1}(x).$]{Plots of $J_{OLS}^{M,N}(h,\hat{\theta}_{OLS}^{M,N}(\boldsymbol{h}))$
for the four schemes considered with $\phi(x)=\phi_{d}(x)$. We depict
the results for $N=11$ or 30 and $\eta^{2}=0,0.01,\text{ or }0.04$\-.
\label{fig:Plots-of-J_main_text} }
\end{figure}

In Figure ~\ref{fig:Plots-of-J_main_text}, we observe that $J^{M,N}(h,\hat{\theta}_{OLS}^{M,N}(h))$
appears to converge differently based on the numerical method used.
To confirm this, we estimate the order of convergence of the numerical
cost function by fitting the best-fit line between $\log\left(J^{M,N}(\vec{h},\hat{\theta}_{OLS}^{M,N}(\boldsymbol{h}))\right)$
and $\log(\boldsymbol{h})$. The slope of this line denotes the order
of convergence for the numerical cost function, and we denote this
calculation as\footnote{Note that we use values of $h$ where $\log\left(J^{M,N}(h,\hat{\theta}_{OLS}^{M,N}(h))\right)$
has not yet converged to $\eta^{2}$ when computing $p_{J}$ (for
example, for $\phi(x)=\phi_{d}(x),N=30,\eta^{2}=0.04,$ we use the
four coarsest points to compute the order for the upwind scheme with
flux limiters).} $p_{J}$. We present some results for $\phi(x)=\phi_{d}(x)$ in Table
~\ref{tab:order_estimate_phi_front}. We observe that $p_{J}$ is about
the same as $p$ for the upwind and Beam-Warming schemes and double
the value of $p$ for the Lax-Wendroff Scheme when $\eta^{2}=0$.
As $\eta^{2}$ increases, this value decreases. There is no apparent
pattern between $p_{J}$ and $p$ for the upwind scheme with flux
limiters. In the supporting material, we depict the values for $p_{J}$
for all data sets considered for $\phi(x)=\phi_{c}(x)$ in Table S1
and for $\phi(x)=\phi_{d}(x)$ in Table S3. For the continuous solutions
when $\phi(x)=\phi_{c}(x),$ we observe that $p_{J}$ is often double
the value of $p$. The order tends to decrease as $\eta$ increases
for both continuous and discontinuous solutions, eventually reaching
zero when experimental error dominates numerical error for all values
in \textbf{$\boldsymbol{h}$}.

\begin{table}
\centering%
\begin{tabular}{|c|c|c|c|c|c|c|c|c|}
\hline 
\multirow{2}{*}{Numerical Method} & \multirow{2}{*}{$p$} &  & \multicolumn{3}{c|}{$p_{J}$} & \multicolumn{3}{c|}{$p_{\theta}$}\tabularnewline
\cline{3-9} 
 &  & \backslashbox{$N$}{$\eta^2$} & $0$ & $4\times10^{-2}$ & $1$ & $0$ & $4\times10^{-2}$ & $1$\tabularnewline
\hline 
\multirow{2}{*}{Upwind} & \multirow{2}{*}{0.5839} & $11$ & 0.517  & 0.208  & -0.002  & 0.360  & 0.446  & 0.406 \tabularnewline
\cline{3-9} 
 &  & 30 & 0.612  & 0.226  & 0.040  & 0.515  & 0.447  & 0.608 \tabularnewline
\hline 
\multirow{2}{*}{Lax-Wendroff} & \multirow{2}{*}{0.4737} & $11$ & 0.966  & 0.490  & -0.011  & 0.463  & 0.680  & 0.355 \tabularnewline
\cline{3-9} 
 &  & 30 & 0.878 & 0.387  & 0.062  & 1.023  & 0.523  & 0.213 \tabularnewline
\hline 
\multirow{2}{*}{Beam-Warming} & \multirow{2}{*}{.7876} & $11$ & 0.785  & 0.367  & 0.000  & 0.769  & 0.525  & -0.077 \tabularnewline
\cline{3-9} 
 &  & 30 & 0.987  & 0.441  & 0.040  & 0.380  & 0.518  & 0.303 \tabularnewline
\hline 
\multirow{2}{*}{Upwind FL} & \multirow{2}{*}{.9570} & 11 & 1.285  & 0.409  & -0.020  & 0.582  & 0.510  & -0.199 \tabularnewline
\cline{3-9} 
 &  & 30 & 1.338  & 0.505  & 0.037  & 0.189  & 0.523  & -0.538 \tabularnewline
\hline 
\end{tabular}\caption{Table of computed numerical and statistical orders of convergence
for each data set when computed with different numerical methods for
$\phi(x)=\phi_{d}(x)$. The variable $p$ denotes the computed order
of numerical accuracy, $p_{J}$ denotes the computed order of convergence
for the numerical cost function, and $p_{\theta}$ denotes the computed
order of convergence for $\|\hat{\theta}_{OLS}^{M,N}(h)-\theta_{0}\|_{2}$.\label{tab:order_estimate_phi_front}}
\end{table}

It is at first puzzling that $p_{J}\approx2p$ for the Lax-Wendroff
method when $\phi(x)=\phi_{d}(x)$, yet $p_{J}\approx p$ for the
upwind and Beam-Warming methods. This behavior can be explained, however,
by looking at the terms A-F from Equation (\ref{eq:J_components})
that result from these different computations. In Figures S14-S17
in the supporting material, we depict the components A through F against
$J_{OLS}^{M,N}(\boldsymbol{h},\hat{\theta}_{OLs}^{M,N}(\boldsymbol{h}))$
for all data sets and for all numerical schemes used. For the upwind
scheme, the $\mathcal{O}(h^{p})$ term $B$ is on the same order of
magnitude as $J_{OLS}^{M,N}(h,\hat{\theta}_{OLs}^{M,N}(h))$, which
causes $p_{J}\approx p$. For the Lax-Wendroff scheme, the $\mathcal{O}(h^{2p})$
term $C$ tends to dominate the numerical cost function as $h$ decreases.
This different behavior of terms A through F for different numerical
methods is a likely explanation for the different $p_{J}/p$ ratios
computed for our numerical methods. We depict these plots of A-F for
all schemes considered in the supporting material in Figures S4-S7
for $\phi(x)=\phi_{c}(x)$.

\subsection{Behavior of the Numerical OLS Estimator\label{subsec:Behavior-of-the-OLS-estimator}}

In Figure ~\ref{fig:Plots-of-q_main_text}, we depict plots of $\|\hat{\theta}_{OLS}^{M,N}(\boldsymbol{h})-\theta_{0}\|_{2}$
against $\boldsymbol{h}$ for $\phi(x)=\phi_{d}(x)$. The ``Upwind
auto'' estimates modify cost function computation and will be discussed
later in Section ~\ref{subsec:Residual-Analysis} with our residual
analysis. In this figure, we observe that it is hard to predict which
scheme will estimate $\theta_{0}$ best. For example, the Beam-Warming
and upwind with flux limiter schemes tend to estimate $\theta_{0}$
best out of all methods considered. The Lax-Wendroff method also provides
the best estimate of $\theta_{0}$ in some cases, however, but its
accuracy is unpredictable. Recall from Figure ~\ref{fig:Numerical-solution-profiles}
that the Lax-Wendroff method computes very dispersive $u(t,x;h,\theta)$
profiles. These dispersive oscillations are a likely explanation for
the somewhat unpredictable $\hat{\theta}_{OLS}^{M,N}(h)$ estimates
for this method. It is possible that numerical simulations that are
computed with parameter vectors close to $\theta_{0}$ cause oscillations
that prevent the numerical approximation from matching the data closely,
whereas numerical simulations that are computed at vectors farther
from $\theta_{0}$ cause oscillations that help the numerical approximation
match the given data points. We depict plots of $\|\hat{\theta}_{OLS}^{M,N}(\boldsymbol{h})-\theta_{0}\|_{2}$
for all data sets considered in the supporting material in Figure
S3 for $\phi(x)=\phi_{c}(x)$ and in Figure S13 for $\phi(x)=\phi_{d}(x).$
These figures show that the Beam-Warming and Lax-Wendroff schemes
often do best for $\phi(x)=\phi_{c}(x)$ and confirm that the best
method is hard to declare for $\phi(x)=\phi_{d}(x).$ 

\begin{figure}
\centering{}\includegraphics[width=0.99\textwidth]{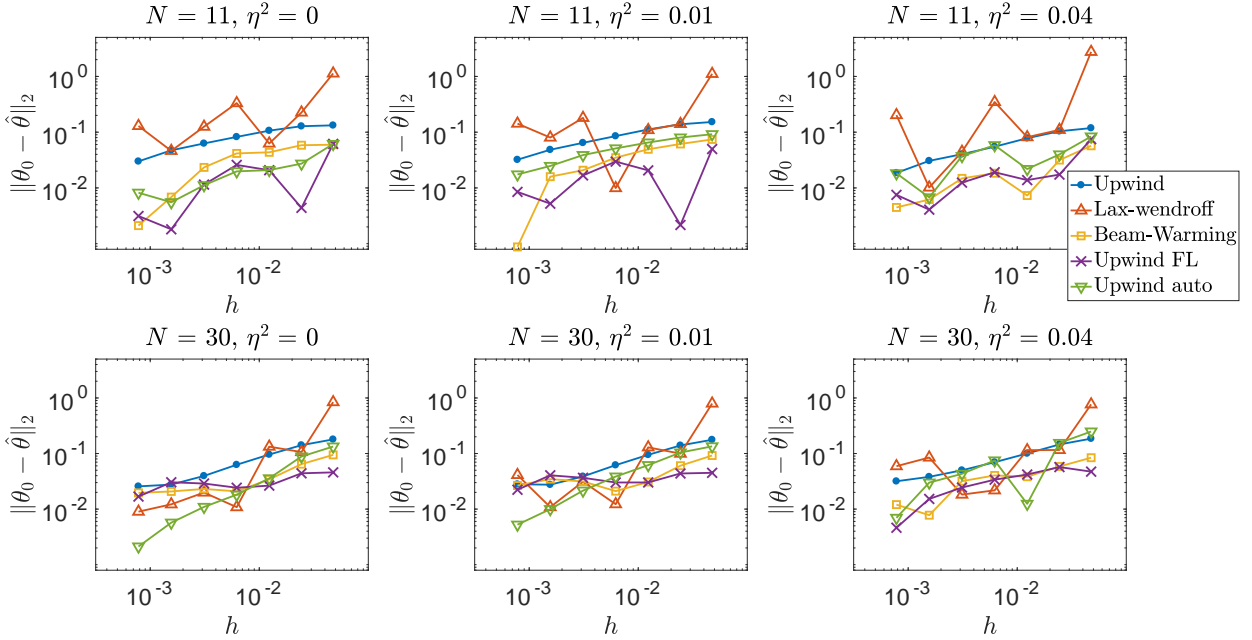}\caption[Plots of the numerical cost function for $\phi(x)=\phi_{1}(x).$]{Plots of $\|\theta_{0}-\hat{\theta}_{OLS}^{M,N}(\boldsymbol{h})\|_{2}$
for the four schemes considered with $\phi(x)=\phi_{d}(x)$. We depict
the results for $N=11$ or 30 and $\eta^{2}=0,0.01,$ or 0.04. \label{fig:Plots-of-q_main_text} }
\end{figure}

We depict a representative selection of computed orders of convergence
for $\|\hat{\theta}_{OLS}^{M,N}(\boldsymbol{h})-\theta_{0}\|_{2}$
(denoted as $p_{\theta}$) in Table ~\ref{tab:order_estimate_phi_front}.
All results for $\phi(x)=\phi_{d}(x)$ are included in Table S4 in
the supporting material. We observe that $p_{\theta}\approx p$ for
the upwind and Beam-Warming schemes and $p_{\theta}\approx p$ or
$p_{\theta}\approx2p$ for the Lax-Wendroff Scheme when $\eta^{2}=0$.
There is no apparent pattern between $p_{\theta}$ and $p$ for the
upwind scheme with flux limiters, although often $p_{\theta}\approx0$
for this method. Recall from Corollary ~\ref{cor:2} that we expect
$\hat{\theta}_{OLS}^{M,N}(h)$ to asymptotically behave as a random
variable with mean $\theta_{0}$ and a variance that converges as
$\mathcal{O}(h^{p}).$ This may explain why many estimates are converging
with $p_{\theta}\approx p$: they converge as their variance. It is not clear
why $p_{\theta}\approx2p$ for some results with the Lax-Wendroff
method. In the supporting material, we depict the values for $p_{\theta}$
for all data sets considered for $\phi(x)=\phi_{c}(x)$ in Table S2 and see that
 $p_\theta\approx p$ for all numerical methods.

\section{Residual Analysis and Confidence Intervals \label{sec:Residual-Analysis}}

In Figure ~\ref{fig:ols_resid_2}, We depict the residuals for the
upwind method, along with $u(t_{i},x_{j};h,\hat{\theta}_{OLS}^{M,N}(h))$,
and observe that local correlations in residual values arise near
the point of discontinuity. Accordingly, in this section, we will
explore how using an autocorrelative statistical model can improve
uncertainty quantification for our inverse problem when using the
first-order upwind method. In Section ~\ref{subsec:Residual-Analysis},
we will use residual analysis to derive this statistical model. We
will demonstrate how this statistical model can improve confidence
interval computation in Section ~\ref{subsec:Confidence-Interval-Computation}.

\subsection{Residual Analysis\label{subsec:Residual-Analysis}}

The statistical model describes how the underlying mathematical model
is observed through experimental data. Residuals can be used to help
practitioners ascertain the underlying statistical model of their
data ~\citep{banks_extension_2012}. If numerical error is prevalent
in a practitioner's computation, then it is interesting to consider
how numerical error propagates in residual computation. Here we will
develop an autocorrelative statistical model to describe how numerical
error propagates in the inverse problem when using the upwind method
for numerical computation when $\phi(x)=\phi_{d}(x)$. 

We define the residual at the point $(t_{i},x_{j})$ as
\begin{align*}
r_{i,j}=u(t_{i},x_{j};h,\hat{\theta}_{OLS}^{M,N}(h))-y_{ij}.
\end{align*}
By minimizing the numerical OLS cost function from Equation (\ref{eq:OLS_cost_num})
in our inverse problem methodology, we are implicitly assuming that
each residual value is independent and identically distributed (\emph{i.i.d.}),
which we expect to be true based on our statistical model in Equation
(\ref{eq:data_form}). We observe from Figure ~\ref{fig:ols_resid_2}
that the residuals are neither independent nor identically distributed:
they are largest near the front location and are correlated with their
neighboring residual values. Numerical diffusion from the upwind method
is the likely explanation for these residual patterns. It smoothens
the numerical solution near the point of discontinuity, which causes
the computation to fall below the analytical solution at values just
left to the point of discontinuity and to rise above the analytical
solution at values to the right of the point of discontinuity. The
correlation between neighboring data points indicates that an autocorrelative
statistical model may be suitable to describe this behavior.

\begin{figure}
\centering{}\includegraphics[width=0.79\textwidth]{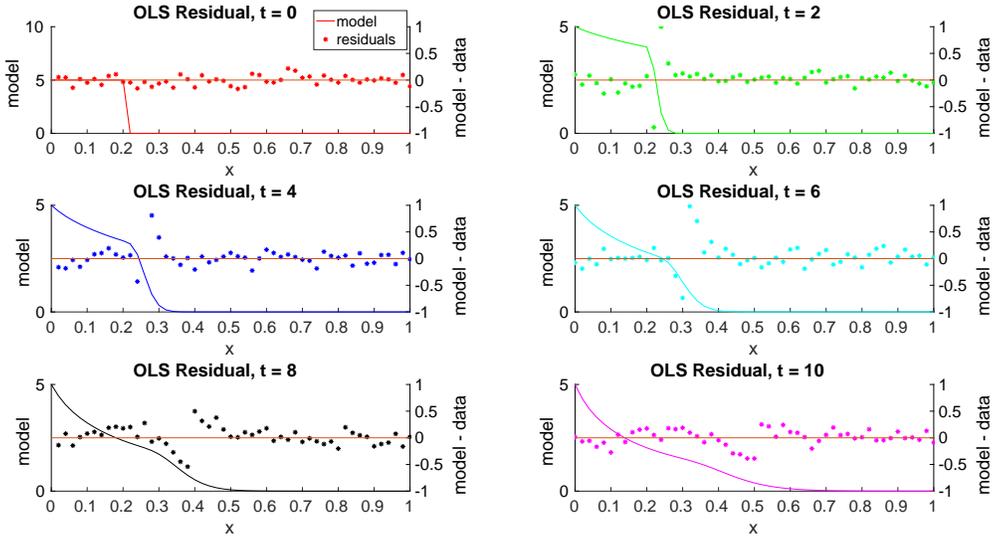}\caption{Plots of $r_{i,j}$ (dots) against simulations of $u(x,t;h,\hat{\theta})$
for the upwind method with $h=1/(10\times2^{4})$ for $\eta=0.1$
and $\phi(x)=\phi_{d}(x)$. \label{fig:ols_resid_2}}
\end{figure}

To quantify the autocorrelated error that arises from numerical diffusion
in this method, we assume the first order autocorrelation structure
from ~\citep[$\S$ 6.2.3]{seber_nonlinear_1988} arises. To illustrate
this structure, assume that the point of discontinuity occurs at the
location $x=x_{d_{i}}$ at $t=t_{i}$. This method assumes that the
residual values to the right of $x_{d_{i}}$ at the fixed time $t_{i}$
will satisfy
\begin{align}
r_{i,d_{i}} & =\dfrac{1}{\sqrt{1-\gamma_{i,+}^{2}}}\epsilon_{i,d_{i}}\nonumber \\
r_{i,d_{i+1}} & =\gamma_{i}^{+}r_{i,d_{i}}+\epsilon_{i,d_{i+1}}\nonumber \\
r_{i,d_{i+2}} & =\gamma_{i}^{+}r_{i,d_{i+1}}+\epsilon_{i,d_{i+2}}\nonumber \\
\vdots\nonumber \\
r_{iN} & =\gamma_{i}^{+}r_{iN-1}+\epsilon_{iN}\label{eq:auto_model}
\end{align}
for $\epsilon_{i,j}\stackrel{i.i.d.}{\sim}\mathcal{N}(0,\eta^{2})$
and $\gamma_{i}^{+}$ is the autocorrelation constant at time $t_{i}$
for points to the right of $x_{d}$. If we let $\vec{r}_{i,+}$ and
$\vec{\epsilon}_{i,+}$ denote the $(N-d_{i}+1)\times1$ vector of
spatial residual values and Gaussian noise terms to the right of (and
including) $x_{d}$ at time $t=t_{i}$, then 
\begin{equation}
R_{i}^{+}\vec{r}_{i,+}=\vec{\epsilon}_{i,+}\label{eq:auto_model_matrix_form}
\end{equation}
 for 
\begin{align*}
R_{i}^{+}=\left[\begin{array}{ccccc}
\sqrt{1-(\gamma_{i}^{+})^{2}} & 0 & 0 & \dots & 0\\
-\gamma_{i}^{+} & 1 & 0 &  & 0\\
0 & -\gamma_{i}^{+} & \ddots & \ddots & \vdots\\
\vdots &  & \ddots & 1 & 0\\
0 & 0 & \dots & -\gamma_{i}^{+} & 1
\end{array}\right].
\end{align*}
By combining Equations (\ref{eq:data_form}) and (\ref{eq:auto_model_matrix_form}),
we see that
\begin{equation}
R_{i}^{+}\vec{r}_{i,+}\stackrel{i.i.d.}{\sim}\mathcal{N}(0,\eta^{2}I).\label{eq:num_stat_model2}
\end{equation}
We will define an analogous statistical model at time $t_{i}$ for
the points to the left of $x=x_{d}$ with rate of autocorrelation
$\gamma_{i}^{-}$ and matrix $R_{i}^{-}$ so that $R_{i}=\text{diag}(\{R_{i}^{-},R_{i}^{+}\})$
and $R_{i}\vec{r}_{i}\stackrel{i.i.d.}{\sim}\mathcal{N}(0,\eta^{2})$
for $\vec{r}_{i}$ denoting the $N\times1$ vector of residuals at
time $t_{i}$. Ultimately, we have 
\begin{equation}
R\vec{r}\ \stackrel{i.i.d.}{\sim}\mathcal{N}(0,\eta^{2}I)\label{eq:num_stat_model3}
\end{equation}
for $R=\text{diag}\left(\{R_{i}\}_{i=1}^{M}\right)$ when $U(h,\theta)$
is used to approximate $U_{0}(\theta)$.

To estimate $\theta_{0}$ and quantify numerical error with an autocorrelation
model, we perform the following two-stage estimation routine for a
data set with a given step size, $h$ (taken from ~\citep[$\S$ 6.2.3]{seber_nonlinear_1988},
with modification):

\noindent\fbox{\begin{minipage}[t]{1\columnwidth - 2\fboxsep - 2\fboxrule}%
1. Fit the model by finding the estimator, $\hat{\theta}_{OLS}^{M,N}(h)$,
that minimizes Equation (\ref{eq:OLS_cost_num}).

2. Compute the corresponding OLS residuals, $\vec{r},$ and estimate
$\gamma_{i}$ and $\gamma_{i,b}$ using the formulas
\begin{align*}
\gamma_{i}^{+}=\dfrac{\sum_{j=d_{i}}^{N-1}r_{i,j}r_{i,j+1}}{\sum_{j=d_{i}}^{N-1}r_{i,j}^{2}},\ \gamma_{i}^{-}=\dfrac{\sum_{j=1}^{d_{i}-1}r_{i,j}r_{i,j+1}}{\sum_{j=1}^{d_{i}-1}r_{i,j}^{2}},\ i=1,...,M
\end{align*}

3. Fit the model by find the estimator, $\hat{\theta}_{auto}^{M,N}(h)$,
that minimizes 
\begin{align*}
J_{auto}^{M,N}(h,\theta)=\dfrac{1}{MN}\vec{r}^{T}V^{-1}\vec{r},\ V^{-1}=R^{T}R.
\end{align*}
\end{minipage}}

We performed this autocorrelation optimization method for the upwind
method and depict the resulting modified residuals, $R\vec{r}$, in
Figure ~\ref{fig:auto_resid_2}. Here we see that the modified residuals
do appear \emph{i.i.d.}, suggesting that the autocorrelation method
is capable of accurately correcting residual computations when error
from numerical diffusion arises. We only show the results for one
data set here, but others exhibit similar results.

\begin{figure}
\centering{}\includegraphics[width=0.79\textwidth]{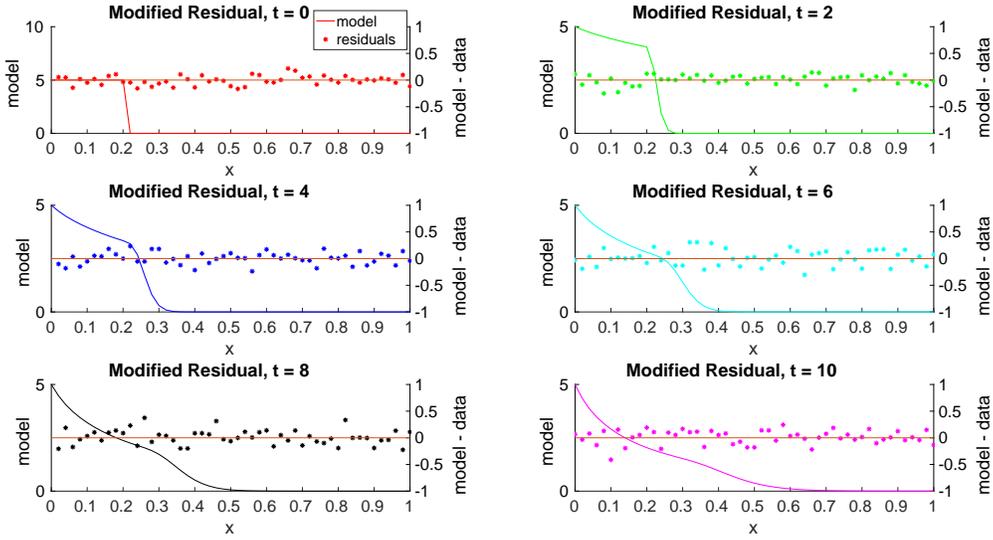}\caption{Plots of modified residuals (dots) against simulations of $u(x,t;h,\hat{\theta})$
for the upwind method with $h=1/(10\times2^{4})$ with $\eta=0.1$
and $\phi(x)=\phi_{d}(x)$.\label{fig:auto_resid_2}}
\end{figure}

The goal of the autocorrelative statistical model is not only to determine
the underlying statistical model, but also to improve estimation of
$\theta_{0}$ by doing so. In Figure ~\ref{fig:Plots-of-q_main_text},
we depict some plots of $\|\hat{\theta}_{auto}^{M,N}(\boldsymbol{h})-\theta_{0}\|_{2}$.
In Figure S13 in the supporting material, we show this for all data
sets considered. Here we see that $\hat{\theta}_{auto}^{M,N}(h)$
is improved over $\hat{\theta}_{OLS}^{M,N}(h)$ for the upwind method
for many data sets and step size values. This method even outperforms
the Beam-Warming, Lax-Wendroff, and upwind scheme with flux limiters
in several cases. For larger values of $\eta^{2},$ estimation of
$\theta_{0}$ was not significantly improved with the autocorrelation
estimation routine, suggesting that the autocorrelation scheme cannot
improve estimation when there is significantly more experimental error
present than numerical error.

\subsection{Confidence Interval Computation\label{subsec:Confidence-Interval-Computation}}

If for some matrix, $Q$, we let the estimator, $\hat{\theta}^{M,N}$,
satisfy 
\begin{align*}
\hat{\theta}^{M,N}(h)=\arg\min_{\theta\in Q_{ad}}\dfrac{1}{MN}\vec{r}^{T}Q^{T}Q\vec{r},
\end{align*}
and assume that the residuals satisfy $(Q\vec{r})_{i}\stackrel{i.i.d.}{\sim}\mathcal{N}(0,\eta^{2})$,
then asymptotically as $M,N\rightarrow\infty,$
\begin{align*}
\hat{\theta}^{M,N}(h)\sim\mathcal{N}(\theta_{0},H_{0}^{M,N})\approx\mathcal{N}\left(\theta_{0},\eta^{2}\left[\left[Q\nabla U_{0}(\theta_{0})\right]^{T}\left[Q\nabla U_{0}(\theta_{0})\right]\right]^{-1}\right).
\end{align*}
See ~\citep[Theorem 2.1]{seber_nonlinear_1988} for more details. Observe
that $P=I$ when minimizing the OLS cost function and $P=R$ when
minimizing the autocorrelation cost function described in Section
~\ref{subsec:Residual-Analysis}. From this, we can show that the $(1-a)100\%$
confidence interval for the $k^{\text{th}}$ component of $\theta_{0}$
is given by the interval
\begin{align}
\hat{\theta}_{k}^{M,N} & \pm SE_{k}(\hat{\theta}^{M,N})t_{1-\nicefrac{a}{2}}^{MN-k_{\theta}},\nonumber \\
\text{for }SE_{k}(\hat{\theta}) & =\sqrt{\hat{\eta}^{2}\hat{H}_{kk}(\hat{\theta})},\nonumber \\
\text{where }\hat{H}(\hat{\theta}) & =\left[\left(Q\nabla U_{0}(\theta_{0})\right)^{T}\left(Q\nabla U_{0}(\theta_{0})\right)\right]^{-1}\nonumber \\
\text{and }\hat{\eta}^{2} & =\dfrac{1}{MN-k_{\theta}}\vec{r}^{T}Q^{T}Q\vec{r}\label{eq:OLS_CI}
\end{align}
where $t_{1-\nicefrac{a}{2}}^{n}$ is the value such that $P(T\ge t_{1-\nicefrac{a}{2}}^{n})=\nicefrac{a}{2}$
if $T$ is a sample from the student's t-distribution with $n$ degrees
of freedom.

In Figure ~\ref{fig:95=000025confidence_UW_OLS}, we depict several
95\% OLS confidence intervals that have been computed with the upwind
method for $\phi(x)=\phi_{d}(x)$. The blue (red) confidence regions
have been computed with large (small) values of $h$. We observe that
the confidence intervals can enclose $\theta_{0}$ well for $N=11$
with the finest grid computations, but often miss $\theta_{0}$ for
$N=30$. Note that these confidence intervals for $N=30$ are close
to $\theta_{0},$ yet their small areas prevent them from actually
enclosing $\theta_{0}$. In the supporting material, we depict the
confidence intervals for all data sets considered for $\phi(x)=\phi_{c}(x)$
in Figures S8-S11. The upwind scheme struggles in these confidence
intervals, but the Beam-Warming and Lax-Wendroff schemes can enclose
$\theta_{0}$ reliably. In the supporting material, we depict the
confidence intervals for all data sets considered for $\phi(x)=\phi_{d}(x)$
in Figures S18-S21. The confidence regions for the Lax-Wendroff, Beam-Warming,
and upwind with flux limiters methods can all capture $\theta_{0}$
for $N=11,$ but struggle for $N=30$ and 51. These confidence intervals
approach $\theta_{0}$ as $h\rightarrow0$, but their areas are too
small to capture $\theta_{0}$.

\begin{figure}
\includegraphics[width=0.99\textwidth]{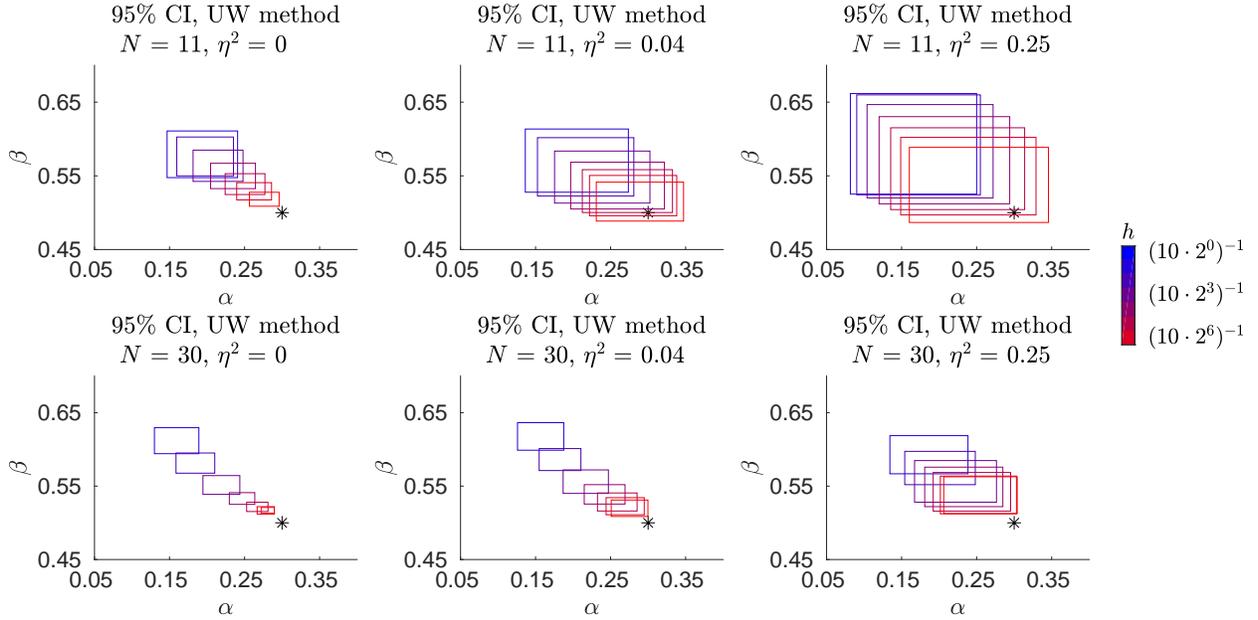}\caption{95\% OLS confidence intervals for $\theta=(\alpha,\beta)^{T}$ using
Equation (\ref{eq:OLS_CI}) with an upwind scheme and $\phi(x)=\phi_{d}(x)$
. The asterisk denotes $\theta_{0},$ and computations were done with
a smaller value of $h$ as the confidence region color changes from
blue to red.\label{fig:95=000025confidence_UW_OLS}}
\end{figure}

\begin{figure}
\includegraphics[width=0.99\textwidth]{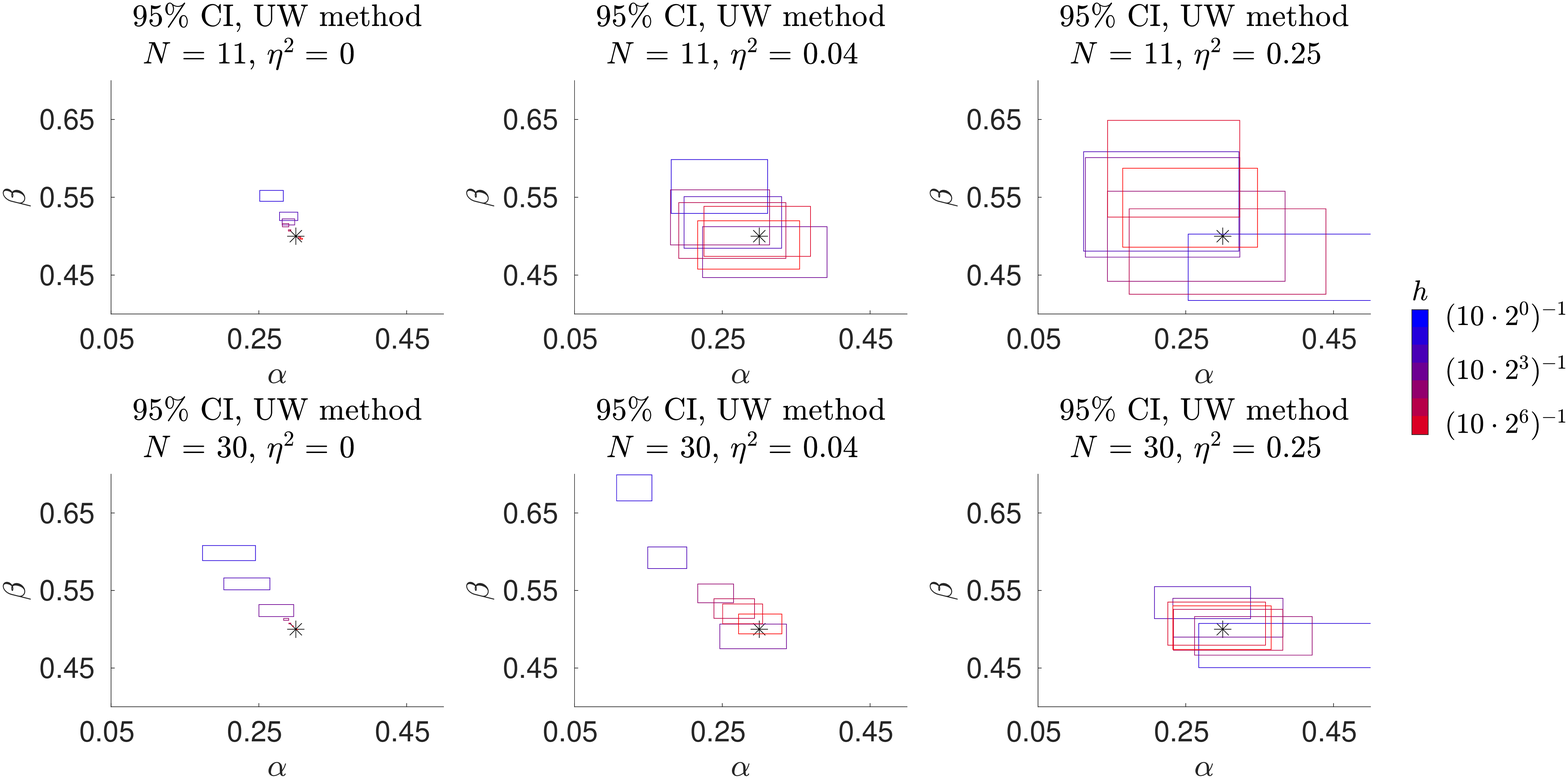}\caption{95\% autocorrelative confidence intervals for $\theta=(\alpha,\beta)^{T}$
using Equation (\ref{eq:OLS_CI}) with an upwind scheme and $\phi(x)=\phi_{d}(x)$.
The asterisk denotes $\theta_{0},$ and computations were done with
a smaller value of $h$ as the confidence region color changes from
blue to red. \label{fig:95=000025-autocorrelative-confidence_upwind}}
\end{figure}

Figure ~\ref{fig:95=000025-autocorrelative-confidence_upwind} depicts
the calculated 95\% confidence intervals for $\theta=(\alpha,\beta)^{T}$
when using the autocorrelative statistical model from Section ~\ref{subsec:Residual-Analysis}
with the upwind method and $\phi(x)=\phi_{d}(x)$. We see that these
confidence regions are an improvement over the OLS confidence intervals,
as the confidence intervals enclose $\theta_{0}$ for most values
of $h$ when $N=11$ and $\eta^{2}\ne0,$ and the confidence intervals
do enclose $\theta_{0}$ for smaller values of $h$ when $N=30$ and
$\eta^{2}\ne0.$ The autocorrelative confidence intervals are depicted
for all data sets in the supporting material in Figure S22. In general,
the method can significantly improve confidence interval computation
for the upwind scheme, but still struggles when $N=51$.

\section{Suggestions for practitioners\label{sec:Suggestions-for-practitioners}}

Based on our results, we suggest some strategies for practitioners
in this section to improve their inverse problem methodologies. The
conclusions from this section are summarized in Table ~\ref{tab:Summary-of-strategies}.

If one is concerned with minimizing $J_{OLS}^{M,N}(h,\hat{\theta}_{OLS}^{M,N}(h))$
(and in turn inferring\emph{ }$\eta^{2}$, the variance of the experimental
error in their data\footnote{assuming the statistical model provided in Equation (\ref{eq:data_form})
is accurate. If not, a slightly modified cost function can be used
to infer $\eta^{2}$ ~\citep[$\S$ 3.2]{banks_mathematical_2009}. Residuals
are a useful tool for determining the underlying statistical model
~\citep{banks_extension_2012}. Different types of statistical models
are discussed in length in ~\citep{seber_nonlinear_1988}. }), one can determine if they have reached the true minimum value by
performing the inverse problem discussed here for multiple values
of the grid size, $h$. If the computed cost function decreases as
$h$ decreases, then $J_{OLS}^{M,N}(h,\hat{\theta}_{OLS}^{M,N}(h))$
is likely larger than $\eta^{2}$ and not a reliable estimate. In
this case, computation of $J_{OLS}^{M,N}(h,\hat{\theta}_{OLS}^{M,N}(h))$
can be improved with further grid refinement or by quantifying the
effects of numerical error through a statistical model, similar to
our analysis in Section ~\ref{sec:Residual-Analysis}. If the order
of the numerical cost function appears to be zero, which can be confirmed
by finding the best-fit line to $\ln\left(J_{OLS}^{M,N}(\boldsymbol{h},\hat{\theta}_{OLS}^{M,N}(\boldsymbol{h}))\right)$
against $\ln(\boldsymbol{h})$, then the practitioner can be confident
that $J_{OLS}^{M,N}(h,\hat{\theta}_{OLS}^{M,N}(h))\approx\eta^{2},$
especially if $M,N$ are large. More data points can also make the
computation of $J_{OLS}^{M,N}(h,\hat{\theta}_{OLS}^{M,N}(h))$ as an 
estimate of $\eta^2$ more reliable.

We observe in Figure ~\ref{fig:Plots-of-q_main_text} that the choice
of step size, $h$, and numerical method can lead to different parameter
estimate values. Accurate parameter estimation is a crucial element
in understanding the scientific system under consideration. To determine 
which numerical method can most accurately estimate $\theta_{0}$ for a 
given $h$, a practitioner may use an artificially-generated data set,
 similar to what we've done in this study. This data set should
resemble the true data as much as possible: it should have the same
number of data points, be parameterized by some rough estimate of
$\theta_{0}$ (such as $\hat{\theta}_{OLS}^{M,N}(h)$ for some value
of $h$), and have the variance of the data points be an estimate
of $\eta^{2}$ (such as $J_{OLS}^{M,N}(h,\hat{\theta}_{OLS}^{M,N}(h))$
for some value of $h$). If an analytical solution is not available
for this data generation, then a very small value of $h$ could be
used to generate the data, and a very accurate numerical method (such
as the upwind scheme with flux limiters) should be used. One should
be mindful that the choice of numerical method may skew their results.
With this data set, determine which numerical method can most accurately
estimate the parameter value used to parameterize the artificial data.
This method should be used to fit the experimental data and estimate
$\theta_{0}$. 

Using a smaller value of $h$ will often not be a practical solution
as a means to improve inverse problem results. We saw in this work
that two alterations can be incorporated with the upwind method to
improve its results: the use of flux limiters in computation or an
autocorrelative statistical model. Both of these strategies have their
advantages and disadvantages. The upwind scheme with flux limiters
yields a very accurate simulation profile (as seen in Figure ~\ref{fig:Numerical-solution-profiles}),
but does increase the computation time because the spatial gradient
has to be estimated at each time iteration. The autocorrelative statistical
model is not computationally expensive; it should only double the computation time of the inverse problem (one round of OLS optimization followed by another
round of autocorrelative optimization). We see in Figure S22 in the
supporting material that this autocorrelative statistical model can 
successfully improve estimate values of $\theta_{0}$ 
when $\eta^{2}$ is small. Both of these alterations
were also only effective for $\phi(x)=\phi_{d}(x);$ we can not recommend
one use these methods when the model solution is continuous. 

Lastly, we saw that both the incorporation of flux limiters and the
autocorrelative statistical method could enhance confidence interval
computation in Section ~\ref{subsec:Confidence-Interval-Computation}.
The biggest factor in preventing accurate confidence interval computation
in this study is large numbers of data points. All methods struggled
to enclose $\theta_{0}$ for $N=51$, which is likely because these
confidence intervals have very small areas. If a practitioner is concerned
with accurate confidence interval computation, then we may suggest
checking that they can accurately enclose $\theta_{0}$ for artificial
data sets with the same number of points as their data sets. If not,
they should consider subsets of their data that will compute wider 
confidence regions that can capture $\theta_{0}$ more reliably.
The autocorrelative statistical model and the incorporation of 
flux limiters are also excellent methods to improve confidence 
interval computation, as seen in Figures S21 and S22 in the supporting
 material.

\begin{sidewaystable}
\vspace{15cm}\hspace{-3.25cm}\begin{tabular}{|c|c|c|}
\hline 
\textbf{Task} & \textbf{To do} & \textbf{Conclusions/Notes}\tabularnewline
\hline 
Improve &  & If $J_{OLS}(\hat{\theta},\boldsymbol{h})$ does not change, the minimum
has likely been reached.\tabularnewline
minimization  & compute $J_{OLS}(\hat{\theta},h)$ & If $J_{OLS}(\hat{\theta},\boldsymbol{h})$ is decreasing with $h$,
computation can be improved with \tabularnewline
of $J_{OLS}(\hat{\theta},h)$ & for several values of $h$ & smaller values of $h$ or by using a statistical model to account
for \tabularnewline
 &  & numerical error.\tabularnewline
\hline 
Determine best & Perform IP on & Choose the method that can best predict the value of $\theta$ that \tabularnewline
numerical method & artificially-generated data & generated the data. If no analytical solution exists, keep in mind\tabularnewline
 & with multiple methods & that the method used in generating data will bias results.\tabularnewline
\hline 
Improve results & 1. Perform more accurate & 1. Note that flux limiters will also increase the computation time.\tabularnewline
without  & computation (\emph{e.g.}, flux limiters) & 2. Note that the autocorrelative statistical model worked best\tabularnewline
decreasing $h$ & 2. Use statistical model & when $\eta^{2}$ was small.\tabularnewline
 & to incorporate numerical error & Both of these strategies work well for discontinuous solutions.\tabularnewline
\hline 
 & 1. Perform IP on & 1. Reduce number of data points\lyxdeleted{John Nardini,,,}{Sun Jul  8 21:45:55 2018}{
} to a number\tabularnewline
 & artificially-generated data & where CI computations reliably enclosed $\theta_{0}$ \tabularnewline
Improve CI & with fewer data points. & for artificially generated data sets. \tabularnewline
computation & 2. Use statistical model & 2. Note that this only works for methods that\tabularnewline
 & to incorporate numerical error & admit numerical diffusion.\tabularnewline
 & 3. Use flux limiters & 3. Will increase computation time.\tabularnewline
\hline 
\end{tabular}\caption{Summary of strategies to help practitioners improve the results of
their inverse problem methodologies. The ``Task'' column denotes
a task that one may wish to carry out. The ``To do'' column suggests
some strategies to perform the desired
task. The ``Conclusions/Notes'' column provides guidelines on how
to interpret the different results one may find, as well as notes
to keep in mind when making final conclusions. The abbreviations in
the table include: $\hat{\theta}=\hat{\theta}_{OLS}^{M,N}(h),$ IP
= inverse problem, and CI = confidence interval. \label{tab:Summary-of-strategies}}
\end{sidewaystable}

\section{Discussion and Future Work \label{sec:Discussion-and-Future}}

Numerical approximations for advection-dominated processes are a known
challenge in the sciences ~\citep{leonard_ultimate_1991,thackham_computational_2008},
and the precise effects of numerical error on an inverse problem have
not been investigated thoroughly. In this document, we fit various
numerical schemes with varying orders of convergence to artificial
data with different numbers of data points and error levels. We use
a numerical cost function in a similar vein to that in ~\citep{banks_statistical_1990}
to show how the convergence of the cost function depends on the orders
of convergence of the numerical scheme used. We also determined the asymptotic
distribution of the parameter estimator in the presence of approximation error. 
In general, the second
order methods outperform the first order upwind methods in computing
the cost function and in parameter estimation, as one would would
expect. There are ways to improve results with this first order method,
however, including the use of flux limiters or an autocorrelative
statistical model. This autocorrelative statistical model describes
how numerical error propagates in the inverse problem and in turn
improve parameter estimation and confidence interval computation.
The incorporation of flux limiters into computation with the upwind
method improves computation accuracy as well as parameter estimation.

There are some aspects of this study that we have left for future
work. In Figure ~\ref{fig:LW_front_res_OLS}, we depict the OLS residuals
when fitting the Lax-Wendroff method to the artificial data when $\phi(x)=\phi_{d}(x)$.
Recall that the modified equation for this second order method is
dispersive, so the leading error terms are composed of high-frequency
modes from the initial condition propagating at different speeds.
This set of residuals shows patterns that would be much more difficult
to quantify than those presented in Section ~\ref{subsec:Residual-Analysis}.
Future work should include a careful analysis into how numerical error
from this and other higher-order numerical methods influences the
statistical model of the data. As we saw in this work, determining
this influence would lead to improvements in parameter estimation
and uncertainty quantification for these methods.

\begin{figure}
\centering{}\includegraphics[width=0.79\textwidth]{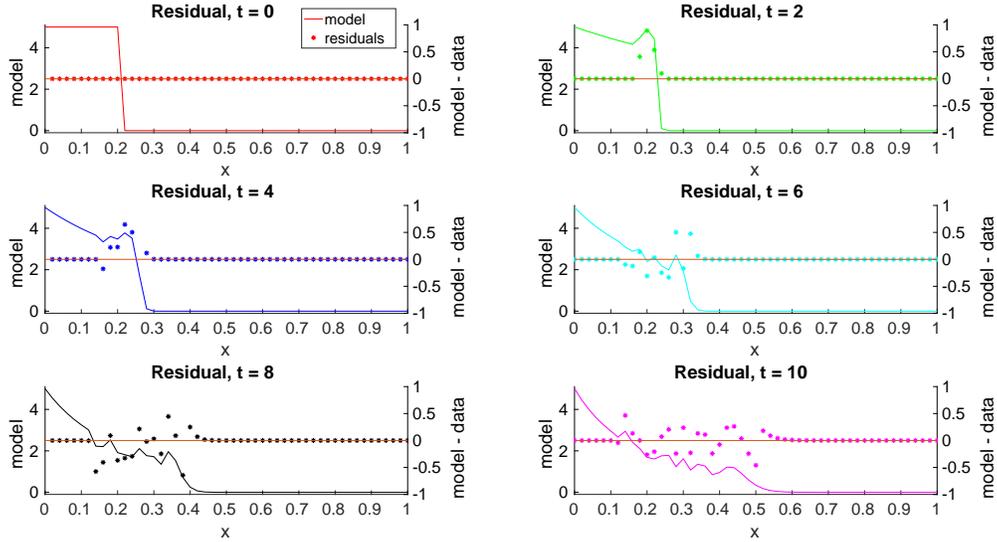}\caption{Plots of $r_{i,j}$ (dots) against simulations of $\vec{u}(x,t;h,\hat{\theta}_{OLS}^{M,N}(h))$
for the Lax-Wendroff method with $h=1/(10\times2^{6})$ with $\eta=0$
for $\phi(x)=\phi_{d}(x).$ \label{fig:LW_front_res_OLS}}
\end{figure}

\ack

This work is supported in part by the Joint NSF/NIH Mathematical Biology Initiative Program via grant NIGMS-R01GM126559.

\appendix

\section{Previous Theory of $\hat{\theta}_{OLS}^{M,N}$\label{sec:Theory-on-theta_ols_h}}

In this Section, we state part of Theorem 2.1 from ~\citep{seber_nonlinear_1988}.
\begin{thm*}
Given that $\mathcal{\vec{\epsilon}\sim\mathcal{N}}(0,\eta^{2}I)$
and $u_{0}(t,x;\theta)$ is sufficiently smooth with respect to $\theta,$
then we have the asymptotic distribution for $\theta_{OLS}^{M,N}$
as $M,N\rightarrow\infty$ given by 
\begin{align*}
\theta_{OLS}^{M,N}\sim\mathcal{N}(\theta_{0},\eta^{2}V),\ V=(\nabla_{\theta}U_{0}(\theta_{0})^{T}\nabla_{\theta}U_{0}(\theta_{0}))^{-1}.
\end{align*}
 
\end{thm*}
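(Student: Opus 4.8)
The plan is to treat this as the classical asymptotic normality theorem for a nonlinear least squares estimator and to establish it by linearizing the model about the true parameter $\theta_0$. First I would prove \emph{consistency}, namely that $\hat{\theta}_{OLS}^{M,N}\to\theta_0$ as $M,N\to\infty$. This follows from combining assumption (A1), which upgrades the empirical double sum defining $J^{M,N}(\theta)$ to the integral functional $J^{*}(\theta)$ uniformly on the compact admissible set $\Theta_{ad}$, with the identifiability condition (A2), which guarantees that $J^{*}$ attains its unique minimizer at $\theta_0$. A standard argmin consistency argument then forces the empirical minimizer to converge to $\theta_0$.

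With consistency in hand, I would exploit the first-order optimality condition $\nabla_\theta J^{M,N}(\hat{\theta})=0$ and apply a mean-value expansion of the gradient about $\theta_0$,
\begin{align*}
0=\nabla_\theta J^{M,N}(\hat{\theta})=\nabla_\theta J^{M,N}(\theta_0)+\nabla_\theta^2 J^{M,N}(\tilde{\theta})\,(\hat{\theta}-\theta_0),
\end{align*}
for some $\tilde{\theta}$ on the segment joining $\hat{\theta}$ and $\theta_0$, so that
\begin{align*}
\hat{\theta}-\theta_0=-\left[\nabla_\theta^2 J^{M,N}(\tilde{\theta})\right]^{-1}\nabla_\theta J^{M,N}(\theta_0).
\end{align*}
The gradient at $\theta_0$ equals $-\tfrac{2}{MN}\nabla_\theta U_0(\theta_0)^{T}\vec{\epsilon}$, an exact linear combination of the Gaussian errors; hence it is exactly normal with mean zero and covariance proportional to $\eta^2\nabla_\theta U_0(\theta_0)^{T}\nabla_\theta U_0(\theta_0)$.

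The main obstacle is controlling the Hessian $\nabla_\theta^2 J^{M,N}(\tilde{\theta})$. Expanding it produces two contributions: a Gram-matrix term $\tfrac{2}{MN}\nabla_\theta U_0^{T}\nabla_\theta U_0$ and a residual-weighted term $-\tfrac{2}{MN}\sum_{i,j}(y_{i,j}-u_0(t_i,x_j;\tilde{\theta}))\nabla_\theta^2 u_0(t_i,x_j;\tilde{\theta})$. The crux is showing that this second term is asymptotically negligible: evaluated at $\theta_0$ its summands carry the mean-zero factor $\epsilon_{i,j}$ and vanish by a law of large numbers (again invoking (A1)), while the smoothness hypothesis on $u_0$ together with $\tilde{\theta}\to\theta_0$ lets me transfer this control to the evaluation point $\tilde{\theta}$ uniformly on a shrinking neighborhood of $\theta_0$. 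This leaves $\nabla_\theta^2 J^{M,N}(\tilde{\theta})\to\tfrac{2}{MN}\nabla_\theta U_0(\theta_0)^{T}\nabla_\theta U_0(\theta_0)$.

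Finally I would assemble the pieces. Substituting the limiting Hessian and the normal gradient into the displayed expression for $\hat{\theta}-\theta_0$, the prefactors of $2/(MN)$ cancel against the gradient normalization, and Slutsky's theorem delivers
\begin{align*}
\hat{\theta}_{OLS}^{M,N}\sim\mathcal{N}\!\left(\theta_0,\eta^2\left(\nabla_\theta U_0(\theta_0)^{T}\nabla_\theta U_0(\theta_0)\right)^{-1}\right),
\end{align*}
which is precisely the stated result with $V=(\nabla_\theta U_0(\theta_0)^{T}\nabla_\theta U_0(\theta_0))^{-1}$. I expect the delicate point to be the uniform negligibility of the residual-weighted Hessian term in a neighborhood of $\theta_0$, since this is where consistency and the smoothness of $u_0$ must interact correctly; the exact (rather than merely asymptotic) normality of the gradient is a convenient simplification afforded by the Gaussian error assumption.
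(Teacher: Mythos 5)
Your proposal is correct in outline, but note that the paper itself does not prove this statement: it is quoted in its appendix from Theorem 2.1 of Seber and Wild \citep{seber_nonlinear_1988}, so there is no in-paper proof to compare against line by line. What you have written is essentially a reconstruction of the classical argument underlying that cited theorem: argmin consistency from (A1)--(A2) on the compact set $\Theta_{ad}$, a mean-value expansion of the score about $\theta_0$, exact Gaussianity of $\nabla_\theta J^{M,N}(\theta_0)=-\tfrac{2}{MN}\nabla_\theta U_0(\theta_0)^{T}\vec{\epsilon}$, negligibility of the residual-weighted Hessian term, and Slutsky's theorem. It is worth contrasting this with the route the paper takes for its own related result, Corollary \ref{cor:2}: there the authors linearize the \emph{model}, $\vec{U}(h,\theta)\approx\vec{U}(h,\theta_0)+\nabla_\theta U(h,\theta_0)[\theta-\theta_0]$, so that minimizing the cost becomes an explicit linear least squares problem whose minimizer $\hat{\beta}=(X_h^{T}X_h)^{-1}X_h^{T}\vec{Z}$ is an exact linear map of the Gaussian errors; normality is then immediate and no Hessian control is needed. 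Your score-expansion route is more work---precisely at the step you flagged as delicate---but it is the more rigorous way to justify the approximation, since the model-linearization argument hides the same difficulty inside the symbol $\approx$. Two small points to tighten: (A1) as stated gives convergence of the empirical sum for a fixed integrand, so the uniform convergence of $J^{M,N}$ over $\Theta_{ad}$ that argmin consistency requires needs an additional equicontinuity-plus-compactness step using the assumed smoothness of $u_0$ in $\theta$; and the first-order condition $\nabla_\theta J^{M,N}(\hat{\theta})=0$ presumes $\theta_0$ lies in the interior of $\Theta_{ad}$, so that $\hat{\theta}$ is eventually an interior minimizer.
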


\section{Convergence of the terms of $J_{OLS}^{M,N}(h,\theta)$\label{sec:Convergence-of-the}}

Here we discuss the asymptotic properties of $J_{OLS}^{M,N}(h,\theta)$.
We begin with the limits as $h\rightarrow0$ in Section ~\ref{subsec:Limits-ash}
and as $M,N\rightarrow\infty$ in ~\ref{subsec:Limits-asMN}.

For brevity, we denote $\hat{\theta}_{OLS}^{M,N}(h)$ as $\hat{\theta}$
for the rest of this section. 

Note that ~\citep{banks_statistical_1990} includes more assumptions
than those provided in this study, but they are already satisfied
by Equations (\ref{eq:adv_eqn}) or (\ref{eq:data_form}). Assumption
(A1) here is a modification of (A3) in ~\citep{banks_statistical_1990}
to include convergence of $L^{1}$ functions. 

\subsection{Limits as $h\rightarrow0$\label{subsec:Limits-ash}}

Here we provide the proof to Lemma ~\ref{lem:num_conv}.
\begin{proof}
Note that our definition for the numerical order of convergence gives
that 
\begin{align*}
\|\vec{U}_{0}(\hat{\theta})-\vec{U}(h,\hat{\theta})\|_{1}=\mathcal{O}(h^{p}).
\end{align*}

Term A is independent of $h$, so it acts as $\mathcal{O}(1)$.

Term $B$ is given as
\begin{align*}
B=\dfrac{1}{MN}\sum_{i,j=1}^{M,N}[u_{0}(t_{i},x_{j};\theta_{0})-u_{0}(t_{i},x_{j};\hat{\theta})]^{2}.
\end{align*}
As $h\rightarrow0$ and $\hat{\theta}$ approaches $\theta_{0}$ (assuming
that there are enough data points used for this to occur). We can
Taylor expand about $\theta_{0}$ and find
\begin{align*}
B\approx\dfrac{1}{MN}\sum_{i,j=1}^{M,N}[\nabla_{\theta}U_{0}(t_{i},x_{j};\theta_{0})[\hat{\theta}-\theta_{0}]]^{2}.
\end{align*}
Note that $\nabla_{\theta}U_{0}(t_{i},x_{j};\theta_{0})$ is independent
of $h$, but from Corollary ~\ref{cor:2}, we have that $\hat{\theta}\sim\mathcal{N}(\theta_{0},V_{h})$
where each entry of $V_{h}$ converges to its corresponding entry
of $V$ as $\mathcal{O}(h^{p}).$ Each term being summed is thus a
random variable with mean independent of $h$ and variance acting
as $\mathcal{O}(h^{2p}).$ We thus conclude that this random variable
has standard deviation $\mathcal{O}(h^{p})$. Thus $B$ converges as $\mathcal{O}(h^{p})$.

Term $C$ is given by 
\begin{align*}
C=\dfrac{1}{MN}\sum_{i,j=1}^{M,N}[u_{0}(t_{i},x_{j};\hat{\theta})-u(t_{i},x_{j};h,\hat{\theta})]^{2},
\end{align*}
which may also be written in terms of the Euclidean vector norm, from
where we can then use equivalence of finite-dimensional norms to show that it will
converge as $\mathcal{O}(h^{2p})$ by assuming that $\hat{\theta}$
is in the compact space, $\Theta_{ad}$:

\begin{align*}
C & =\dfrac{1}{MN}\|\vec{U}_{0}(\hat{\theta})-\vec{U}(h,\hat{\theta})\|_{2}^{2}\\
 & \le\dfrac{K^{2}}{MN}\|\vec{U}_{0}(\hat{\theta})-\vec{U}(h,\hat{\theta})\|_{1}^{2}=K_Ch^{2p}\text{.}
\end{align*}
Thus $C$ converges as $\mathcal{O}(h^{2p})$.

Term $D$ is given by 
\begin{align*}
D=\dfrac{2}{MN}\sum_{i,j=1}^{M,N}\mathcal{\epsilon}_{i,j}(u_{0}(t_{i},x_{j};\theta_{0})-u_{0}(t_{i},x_{j};\hat{\theta}))
\end{align*}
 We begin with a Taylor expansion about $\theta_{0}$ to find
\begin{align*}
D\approx\dfrac{-2}{MN}\sum_{i,j=1}^{M,N}\epsilon_{i,j}(\nabla_{\theta}u_{0}(t_{i},x_{j};\theta_{0})(\hat{\theta}-\theta_{0})).
\end{align*}
We then use the Cauchy-Schwarz Inequality to show
\begin{align*}
|D|\lesssim\sqrt{\dfrac{2}{MN}\left(\sum_{i,j=1}^{M,N}\epsilon_{i,j}^{2}\right)\dfrac{2}{MN}\left(\sum_{i,j=1}^{M,N}(\nabla_{\theta}U_{0}(\theta_{0})(\hat{\theta}-\theta_{0}))^{2}\right)}.
\end{align*}
The first term on the right will be close to its finite mean of $\sqrt{2\eta^{2}}$
if $M,N$ are large by the law of large numbers (LLN). By Corollary
~\ref{cor:2}, the second term on the right is equivalent to
\begin{align*}
\sqrt{\dfrac{2}{MN}\left(\sum_{i,j=1}^{M,N}(\nabla_{\theta}U_{0}(\theta_{0})\theta_{D})^{2}\right)},
\end{align*}
where $\theta_{D}$ has a standard deviation that converges to $V$
as $\mathcal{O}(h^{p/2}).$ Everything else in this term is independent
of $h$, so $D$ is a random variable with standard
deviation converging as $\mathcal{O}(h^{p/2})$.
Thus $D$ converges as $\mathcal{O}(h^{p/2})$.

The term $E$ is written as
\begin{align*}
E=\dfrac{2}{MN}\sum_{i,j=1}^{M,N}\mathcal{\epsilon}_{i,j}(u_{0}(t_{i},x_{j};\hat{\theta})-u(t_{i},x_{j};h,\hat{\theta})).
\end{align*}
We can bound this term from above as $h\rightarrow0$ as 
\begin{align*}
|E| & \le\dfrac{2}{MN}\sqrt{\left(\sum_{i,j=1}^{M,N}\epsilon_{i,j}^{2}\right)\left(\sum_{i,j=1}^{M,N}\left|(u_{0}(x_{i},t_{j};\theta)-u(x_{i},t_{j};h,\theta))\right|^{2}\right)}\\
 & \le\dfrac{2K}{MN}\sqrt{\left(\sum_{i,j=1}^{M,N}\epsilon_{i,j}^{2}\right)\left(\sum_{i,j=1}^{M,N}\left|(u_{0}(x_{i},t_{j};\theta)-u(x_{i},t_{j};h,\theta))\right|\right)^{2}}\\
 & =\mathcal{O}(h^{p})
\end{align*}
where the first inequality is by the Cauchy-Schwarz inequality, the
second is by the equivalence of finite-dimensional norms. The final
approximation is from the LLN giving that the first term will converge
to its finite mean for $M,N$ large and then our definition for the
numerical order of convergence. Thus $E$ converges as $\mathcal{O}(h^{p})$.

Term F is written as
\begin{align*}
F=\dfrac{2}{MN}\sum_{i,j=1}^{M,N}\left[\left(u_{0}(x_{i},t_{j};\hat{\theta})-u(x_{i},t_{j};h,\hat{\theta})\right)\left(u_{0}(x_{i},t_{j};\theta_{0})-u_{0}(x_{i},t_{j};\hat{\theta})\right)\right].
\end{align*}
Using the Cauchy-Schwarz Inequality, we find

\begin{align*}
|F| & \le\dfrac{2}{MN}\|\vec{U_{0}}(\hat{\theta})-\vec{U}(h,\hat{\theta})\|_{2}\|\vec{U}_{0}(\theta_{0})-\vec{U}_{0}(\hat{\theta})\|_{2}.
\end{align*}
We then use the equivalence of norms and Taylor expansion about $\theta_{0}$
to find
\begin{align*}
|F| & \lesssim\dfrac{2K}{MN}\|\vec{U_{0}}(\hat{\theta})-\vec{U}(h,\hat{\theta})\|_{1}\|\nabla U_{0}(\theta_{0})(\hat{\theta}-\theta_{0})\|_{2}.
\end{align*}
The first term converges as $\mathcal{O}(h^{p})$ from our definition
for the numerical order of convergence. The second term is a random
variable with standard deviation converging as $\mathcal{O}(h^{p/2})$
from Corollary ~\ref{cor:2}. Thus $F$ converges as $\mathcal{O}(h^{3p/2})$.

\end{proof}

\subsection{Limits as $M,N\rightarrow\infty$\label{subsec:Limits-asMN}}

Here we provide the proof of Lemma ~\ref{lem:data_conv}.
\begin{proof}
Note that $\epsilon_{i,j}^{2}$ is distributed as $\eta^{2}$ times
a degree-1 chi-squared random variable. We thus observe that $A$
is distributed as $\nicefrac{\eta^{2}}{MN}$ times a degree-$MN$
chi-squared random variable, which has mean $\eta^{2}$ and variance
$\nicefrac{2\eta^{4}}{MN}.$ By the classical Central Limit Theorem
(CLT), 
\begin{align*}
\sqrt{MN}(A-\eta^{2})\xrightarrow{D}\mathcal{N}(0,2\eta^{2})
\end{align*}
as $M,N\rightarrow\infty$, where $\xrightarrow{D}$ denotes convergence
in distribution. Thus $A$ converges as
$\mathcal{O}\left(1/\sqrt{MN}\right)$.

Term B is the sum of the difference of the true solution squared when
computed at $\theta_{0}$ and $\theta$. From assumption (A1),
\begin{align*}
B(\hat{\theta}) & =\dfrac{1}{MN}\sum_{i,j=1}^{M,N}[u_{0}(t_{i},x_{j};\theta_{0})-u_{0}(t_{i},x_{j};\hat{\theta})]^{2}\rightarrow J^{*}(\hat{\theta}).
\end{align*}
$\text{as }M,N\rightarrow\infty$ by (A1). This convergence is
identical to a first order Riemann sum. Thus $B$ converges
with order $\mathcal{O}\left(1/(MN)\right)$.

Term $C$ is given by 
\begin{align*}
C=\dfrac{1}{MN}\sum_{i,j=1}^{M,N}[u_{0}(t_{i},x_{j};\hat{\theta})-u(t_{i},x_{j};h,\hat{\theta})]^{2}.
\end{align*}
We assume $\hat{\theta}$ stays within $Q_{ad}$ and use our Definition 
for the order of convergence to find that
\begin{align*}
C\approx\dfrac{1}{MN}\sum_{i,j=1}^{M,N}[w(t_{i},x_{j};h)h^{p}]^{2}=\mathcal{O}(h^{2p}),
\end{align*}
Thus $C$ is independent of $M,N$.

Term D is written as
\begin{align*}
D=\dfrac{2}{MN}\sum_{i,j=1}^{M,N}\epsilon_{i,j}(u_{0}(t_{i},x_{j};\theta_{0})-u_{0}(t_{i},x_{j};\hat{\theta})).
\end{align*}
$u_{0}(t,x;\theta)$ is bounded below by 0 and above by 1, so we can
bound this term as
\begin{align*}
\dfrac{-2}{MN}\sum_{i,j=1}^{M,N}\epsilon_{i,j}\le D\le\dfrac{2}{MN}\sum_{i,j=1}^{M,N}\epsilon_{i,j}.
\end{align*}
By the CLT, both of these bounds will converge in distribution to
zero with order $\mathcal{O}(1/\sqrt{MN})$. Thus
$D$ converges as $\mathcal{O}(1/\sqrt{MN}).$

Term $E$ is written as
\begin{align*}
E=\dfrac{2}{MN}\sum_{i,j=1}^{M,N}\epsilon_{i,j}(u_{0}(t_{i},x_{j};\theta)-u(x_{i},t_{j};h,\theta)).
\end{align*}
We assume $\hat{\theta}$ stays within $Q_{ad}$ and use our definition
for the order of convergence to find that 
\begin{align*}
E\approx\dfrac{2}{MN}\sum_{i,j=1}^{M,N}\epsilon_{i,j}w(t_{i},x_{j};h)h^{p},
\end{align*}
so that
\begin{align*}
-\dfrac{2h^{p}}{MN}\sum_{i,j=1}^{M,N}\epsilon_{i,j}\lesssim E\lesssim\dfrac{2h^{p}}{MN}\sum_{i,j=1}^{M,N}\epsilon_{i,j},
\end{align*}
which shows that $E$ will converge in distribution to zero with
order $\mathcal{O}(1/\sqrt{MN})$ as $M,N\rightarrow\infty$ by the
CLT. Thus $E$ converges as $\mathcal{O}(1/\sqrt{MN})$.

Term F is written as
\begin{align*}
F=\dfrac{2}{MN}\sum_{i,j=1}^{M,N}\left[\left(u_{0}(x_{i},t_{j};\hat{\theta})-u(x_{i},t_{j};h,\hat{\theta})\right)\left(u_{0}(x_{i},t_{j};\theta_{0})-u_{0}(x_{i},t_{j};\hat{\theta})\right)\right].
\end{align*}
If we assume that $\hat{\theta}$ remains in $Q_{ad}$ then we can
use our definition for the order of convergence and the boundedness
of $w(t,x;h)$ to show that
\begin{align*}
\dfrac{-2h^{p}}{MN}\sum_{i,j=1}^{M,N}\left(u_{0}(x_{i},t_{j};\theta_{0})-u_{0}(x_{i},t_{j};\hat{\theta})\right)\le F\le\dfrac{2h^{p}}{MN}\sum_{i,j=1}^{M,N}\left(u_{0}(x_{i},t_{j};\theta_{0})-u_{0}(x_{i},t_{j};\hat{\theta})\right).
\end{align*}
If we define 
\begin{align*}
J^{1}(\theta)=\int_{X}\int_{T}\left(u_{0}(x,t;\theta_{0})-u_{0}(x,t;\theta)\right)d\nu(t)d\chi(x),
\end{align*}
then the sum in the above equation will converge to 
$\mathcal{O}(h^{p})J^{1}(\hat{\theta})$ as a first order Riemann sum.
Note that we can bound this integral between -10 and 10.
Thus $F$ converges as $\mathcal{O}(1/(MN))$ to a $\mathcal{O}(h^{p})$ term.
\end{proof}

\newcommand{\newblock}{}

\end{document}